\newcommand{\norm}[1]{\left\|#1\right\|}
\newcommand{\N}{\mathbb{N}}
\newcommand{\abs}[1]{\left|#1\right|}
\newcommand{\F}{\mathsf{F}}
\newcommand{\NF}{\mathsf{N}_{\F}}
\renewcommand{\d}{\mathsf{d}}
\newcommand{\ds}{\,\d s}
\newcommand{\dx}{\,\d\bm x}
\newcommand{\I}{\mathsf{I}_h}
\newcommand{\jmp}[1]{\left\llbracket#1\right\rrbracket}
\newcommand{\NN}[1]{\left|\!\left|\!\left|#1\right|\!\right|\!\right|}
\newcommand{\sign}{\mathrm{sign}}
\newtheorem{theorem}{Theorem}[section]
\newtheorem{lemma}[theorem]{Lemma}
\newtheorem{proposition}[theorem]{Proposition} 
\newtheorem{cor}[theorem]{Corollary}
\theoremstyle{definition}
\newtheorem{example}[theorem]{Example}
\newtheorem{algorithm}[theorem]{Algorithm}
\newtheorem{remark}[theorem]{Remark}
\title[Fully Adaptive Newton-Galerkin Methods]{Fully Adaptive Newton-Galerkin Methods for Semilinear Elliptic Partial Differential Equations}
\author[M.~Amrein]{Mario Amrein}
\author[T.~P.~Wihler]{Thomas P.~Wihler}
\address{Mathematics Institute, University of Bern, CH-3012 Switzerland}
\email{mario.amrein@math.unibe.ch}
\email{wihler@math.unibe.ch}
\begin{document}

\begin{abstract}
In this paper we develop an adaptive procedure for the numerical solution of general, semilinear elliptic problems with possible singular perturbations. Our approach combines both a prediction-type adaptive Newton method and an adaptive finite element discretization (based on a robust {\em a posteriori} error analysis), thereby leading to a fully adaptive Newton-Galerkin scheme. Numerical experiments underline the robustness and reliability of the proposed approach for different examples.
\end{abstract}

\keywords{Adaptive Newton-Raphson methods, semilinear elliptic problems, singularly perturbed problems, adaptive finite element methods.}

\subjclass[2010]{49M15,58C15,65N30}

\maketitle

\section{Introduction}

The focus of this paper is the numerical approximation of semilinear elliptic problems with possible singular perturbations. More precisely, for a fixed parameter~$\varepsilon>0$ (possibly with~$\varepsilon\ll 1$), and a continuously differentiable function $f:\,\mathbb{R}\to\mathbb{R}$, we consider the problem of finding a function~$u:\,\Omega\to\mathbb{R}$ which satisfies
\begin{equation}\label{poisson}
\begin{aligned}
-\varepsilon \Delta u &=f(u) \text{ in } \ \Omega,\qquad
u=0 \text{ on } \partial \Omega. 
\end{aligned}
\end{equation}
Here, $\Omega\subset\mathbb{R}^d$, with $d=1$ or $d=2$, is an open and bounded 1d interval or a 2d Lipschitz polygon, respectively. Problems of this type appear in a wide range of applications including, e.g., nonlinear reaction-diffusion in ecology and chemical models~\cite{CaCo03,Ed05,Fr08,Ni11,OkLe01}, economy~\cite{BaBu95}, or classical and quantum physics~\cite{BeLi83,St77}. 

From an analysis point of view, semilinear elliptic boundary value problems~\eqref{poisson} have been studied in detail by a number of authors over the last decades; we refer, e.g., to the monographs~\cite{AmMa06,Ra86,Sm94} and the references therein. In particular, solutions of~\eqref{poisson} are known to be typically not unique (even infinitely many solutions may exist), and, in the singularly perturbed case, to exhibit boundary layers, interior shocks, and (multiple) spikes. The existence of multiple solutions due to the nonlinearity of the problem and/or the appearance of singular effects constitute two challenging issues when solving problems of this type numerically; see, e.g.,\cite{RoStTo08,Verhulst}.

Nowadays the use of the Newton-Raphson method in dealing with nonlinear phenomena is standard. Indeed, this method is highly successful if initial guesses are chosen close enough to a solution and if the basins of attraction for different solutions are sufficiently well-behaved for the Newton iteration to stay within the same attractor. As a consequence, on a {\em local} level, the scheme is often celebrated for its quadratic convergence regime close to a root. From a {\em global} perspective, however, the Newton method is well-known to exhibit chaotic behavior. Indeed, applying the Newton method to algebraic systems of equations, for example, may result in highly complex or even fractal attractor boundaries of the associated roots; see, e.g., \cite{peitgen}. This is related to the fact that the Newton iteration may be unstable in the sense that, farther away from a root, iterates may switch from one basin of attraction to another, and hence, converge to an undesired root (or even diverge). In the context of semilinear elliptic PDE the situation is even worse (and yet more severe in the singularly perturbed case): In fact, for certain types of problems, the Newton iteration will typically tend to become unbounded, and hence, will not approach a sensible solution at all; see, e.g., \cite{ChNiZo00}, where this issue has been addressed for a certain class of problems by means of a suitable rescaling technique in each step. A frequently employed remedy to tame (although not to eliminate) the chaotic behavior of Newton's method is the use of damping to avoid the appearance of possibly large updates in the iterations. An even more sophisticated way to further improve the quality of the results is the application of {\em variable} damping; see, e.g., the extensive overview~\cite{5} or \cite{DSB95,epureanu:102} for different variations of the classical Newton scheme. The idea of adaptively adjusting the magnitude of the Newton updates has also been studied in the recent articles~\cite{AmWi14,ScWi11}; there, following, e.g.,~\cite{neuberger,peitgen,smale}, the Newton method was identified as the numerical discretization of a specific ordinary differential equation (ODE)---the so-called continuous Newton method---by the explicit Euler scheme, with a fixed step size~$k=1$. Then, in order to tame the chaotic behavior of the Newton iterations, the idea presented in~\cite{AmWi14,ScWi11} is based on discretizing the continuous Newton ODE by the explicit Euler method with variable step sizes, and to combine it with a simple step size control procedure; in particular, the resulting algorithm retains the optimal step size~$k=1$ whenever sensible and is able to deal with singularities in the iterations more carefully than the classical Newton scheme. In fact, numerical experiments for algebraic and differential equations in~\cite{AmWi14,ScWi11} revealed that the new method is able to generate attractors with almost smooth boundaries, whereas the traditional Newton method produces fractal Julia sets; moreover, the numerical tests demonstrated an improved convergence rate not matched on average by the classical Newton
method.

In the present paper, our goal is to extend the approach developed in~\cite{AmWi14,ScWi11} to the numerical solution of~\eqref{poisson}. To this end, we will start by applying an adaptive Newton scheme, which is based on some simple prediction strategies, to the {\em nonlinear} boundary value problem~\eqref{poisson}. Subsequently, we discretize the resulting sequence of {\em linear} problems by a standard $\mathbb{P}_1$-finite element method (FEM); note that this approach is in contrast to solving the nonlinear algebraic system resulting from a FEM discretization of the original PDE with the aid of the Newton method (see, e.g., the work on inexact Newton methods~\cite{ErVo13}). In order to control the approximation error caused by the FEM discretization, we derive a residual-based {\em a posteriori} error analysis which allows to adaptively refine the finite element mesh; here, following the approach in~\cite{Verfuerth}, we will take particular care of the singular perturbation in order to obtain $\varepsilon$-robust error estimates. The final error estimate (Theorem~\ref{thm:1}) bounds the error in terms of the (elementwise) finite element approximation (FEM-error) and the error caused by the linearization of the original problem due to Newton's method (Newton-error). Then, in order to define a fully adaptive Newton-Galerkin scheme, we propose an interplay between the adaptive Newton-Raphson method and the adaptive finite element approach: More precisely, as the adaptive procedure is running, we either perform a Newton-Raphson step in accordance with our prediction strategy (Section~\ref{sc:anewton}) or refine the current mesh based on the {\em a posteriori error} analysis (Section~\ref{sc:FEM}), depending on which error (FEM-error or Newton-error) is  more dominant in the current iteration step. Our numerical results will reveal that sensible solutions can be found even in the singularly perturbed case, and that our scheme is reliable for reasonable choices of initial guesses, and $\varepsilon$-robust.

For the purpose of this paper, we suppose that a (not necessarily unique) solution~$u\in X:=H^1_0(\Omega)$ of~\eqref{poisson} exists; here, we denote by $H^1_0(\Omega)$ the standard Sobolev space of functions in~$H^1(\Omega)=W^{1,2}(\Omega)$ with zero trace on~$\partial\Omega$. Furthermore, signifying by~$X'=H^{-1}(\Omega)$ the dual space of~$X$, and upon defining the map $\F_{\varepsilon}: X\rightarrow X'$ through
\begin{equation}\label{eq:Fweak}
\left \langle \F_{\varepsilon}(u),v\right \rangle :=  \int_{\Omega}\left\{\varepsilon \nabla u\cdot \nabla v-f(u)v\right\}\dx\qquad \forall v\in X,
\end{equation}
where $\left\langle\cdot,\cdot\right\rangle$ is the dual product in~$X'\times X$, the above problem~\eqref{poisson} can be written as a nonlinear operator equation in~$X'$:
\begin{equation}\label{eq:F0}
u\in X:\qquad \F_\varepsilon(u)=0.
\end{equation}
In addition, on any subset~$D\subseteq\Omega$, we introduce the norm
\begin{equation}
\label{eq:Norm}
\NN{u}_{\varepsilon,D}:=\Bigl(\varepsilon\norm{\nabla u}_{0,D}^2 +\norm{u}_{0,D}^2 \Bigr)^{\nicefrac{1}{2}},
\end{equation}
where~$\|\cdot\|_{0,D}$ denotes the $L^2$-norm on~$D$. Note that, in the case of~$f(u)=-u$, when~\eqref{poisson} is linear and strongly elliptic, the norm $\NN{\cdot}_{\varepsilon,\Omega}$ is a natural energy norm on~$X$. Frequently, for~$D=\Omega$, the subindex~`$D$' will be omitted.  Furthermore, the associated dual norm of~$\F_\varepsilon$ from~\eqref{eq:Fweak}  is given by
\[
\NN{\F_\varepsilon(u)}_{X',\varepsilon} =  \sup_{\genfrac{}{}{0pt}{}{v\in X}{\NN{v}_{\varepsilon}=1}}\int_{\Omega}\left\{\varepsilon \nabla u\cdot \nabla v-f(u)v\right\}\dx.
\]
Throughout this work we shall use the abbreviation $x \preccurlyeq y$ to mean $x \leq c y$, for a constant $c>0$ independent of the mesh size~$h$ and of~$\varepsilon>0 $. 

The paper is organized as follows: In Section~\ref{sc:anewton} we will consider the Newton-Raphson method within the context of dynamical systems in general Banach spaces, and present two prediction strategies for controlling the Newton step size parameter. Furthermore, Section~\ref{sc:SEP} focuses on the application of the Newton-Raphson method to semilinear elliptic problems. In addition, we discuss the discretization of the problems under consideration by finite element methods in Section~\ref{sc:FEM}, and derive an $\varepsilon$-robust {\em a posteriori} error analysis. A series of numerical experiments illustrating the performance of the fully adaptive Newton-Galerkin scheme proposed in this work will be presented as well. Finally, we summarize our findings in Section~\ref{sc:concl}.

\section{Adaptive Newton-Raphson Methods in Banach Spaces}\label{sc:anewton}

In the following section we shall briefly revisit the adaptive Newton algorithm from~\cite{AmWi14}, and additionally, will derive an improved variant of our previous work.

\subsection{Abstract Framework}
Let $ X, Y $ be two Banach spaces, with norms~$\|\cdot\|_X$ and~$\|\cdot\|_Y$, respectively. Given an open subset~$\Xi\subset X$, and a (possibly nonlinear) operator~$\F:\,\Xi\to Y$, we consider the {\em nonlinear} operator equation
\begin{equation}\label{eq:F=0}
\F(u)=0,
\end{equation} 
for some unknown zeros~$u\in\Xi$. Supposing that the Fr\'echet derivative~$\F'$ of~$\F$ exists in~$\Xi$ (or in a suitable subset), the classical Newton-Raphson method for solving~\eqref{eq:F=0} starts from an initial guess~$u_0\in\Xi$, and generates the iterates
\begin{equation}\label{eq:newton}
u_{n+1}=u_n+\delta_n,\qquad n\ge 0,
\end{equation}
where the update~$\delta_n\in X$ is implicitly given by the {\em linear} equation
\[
\F'(u_n)\delta_n=-\F(u_n),\qquad n\ge 0.
\]
Naturally, for this iteration to be well-defined, we need to assume that~$\F'(u_n)$ is invertible for all~$n\ge 0$, and that~$\{u_n\}_{n\ge 0}\subset\Xi$. 

\subsection{A Simple Prediction Strategy}\label{sc:simple}

In order to improve the reliability of the Newton method~\eqref{eq:newton} in the case that the initial guess~$u_0$ is relatively far away from a root $u_{\infty}\in\Xi$ of~$\F$, $\F(u_\infty)=0$, introducing some damping in the Newton-Raphson method is a well-known remedy. In that case~\eqref{eq:newton} is rewritten as
\begin{equation}\label{eq:damped}
u_{n+1}=u_n-k_n\F'(u_n)^{-1}\F(u_n),\qquad n\ge 0,
\end{equation} 
where $k_n>0$, $n\ge 0$, is a damping parameter that may be adjusted {\em adaptively} in each iteration step. 

Provided that~$\F'(u)$ is invertible on a suitable subset of~$\Xi\subset X$, we define the {\em Newton-Raphson Transform} by
\[
u\mapsto\NF(u):=-\F'(u)^{-1}\F(u).
\]
Then, rearranging terms in~\eqref{eq:damped}, we notice that
\begin{equation}\label{eq:disc}
\frac{u_{n+1}-u_{n}}{k_n}=\NF(u_n), \qquad n\ge 0,
\end{equation}
i.e., \eqref{eq:damped} can be seen as the discretization of the {\em Davydenko-type system},
\begin{equation}\label{eq:davy}
\begin{split}
\dot{u}(t)&=\NF(u(t)), \quad t\geq 0,\qquad
 u(0)=u_0,
 \end{split}
\end{equation}
by the forward Euler scheme with step size~$k_n>0$. 

For~$t\in[0,\infty)$, the solution~$u(t)$ of~\eqref{eq:davy} defines a trajectory in~$X$ that begins at~$u_0$, and that will potentially converge to a zero of~$\F$ as~$t\to\infty$. Indeed, this can be seen (formally) from the integral form of~\eqref{eq:davy}, that is,
\begin{equation}\label{eq:int}
\F(u(t))=\F(u_0)e^{-t},\qquad t\ge 0,
\end{equation}
which implies that~$\F(u(t))\to 0$ as~$t\to\infty$.

Now taking the view of dynamical systems, our goal is to compute an upper bound for the value of the step sizes~$k_n>0$ from~\eqref{eq:damped}, $n\ge 0$, so that the discrete forward Euler solution~$\{u_n\}_{n\ge 0}$ from~\eqref{eq:damped} stays reasonably close to the continuous solution of~\eqref{eq:davy}. To this end, we approximate the trajectory~$u$ from~\eqref{eq:davy} close to the initial value~$u_0$ by a second-order Taylor expansion:
\begin{equation}\label{eq:u2}
u(t)\approx u_0+t\dot{u}(0)+t^2\xi,
\end{equation}
for some (fixed)~$\xi\in X$ to be determined. Using the integral form~\eqref{eq:int}, we see that
\[
\F(u_0)e^{-t}=\F(u(t))\approx \F(u_0+t\dot{u}(0)+t^2\xi),
\]
where a Taylor expansion of~$\F$ leads to $\F(u_0)e^{-t}\approx \F(u_0)+\F'(u_0)(t\dot{u}(0)+t^2\xi)$.
Moreover, from~\eqref{eq:davy} we observe that
\begin{equation}\label{eq:u0dot}
\dot{u}(0)=\NF(u_0),
\end{equation}
or equivalently, $\F'(u_0)\dot{u}(0)=-\F(u_0)$, and hence
$\F(u_0)(e^{-t}+t-1)\approx t^2\F'(u_0)\xi$.
Approximating~$e^{-t}\approx 1-t+\frac{1}{2}t^2$ results in
\begin{equation}\label{eq:xi}
\xi\approx \frac12\F'(u_0)^{-1}\F(u_0)=-\frac12\NF(u_0).
\end{equation}
Combining~\eqref{eq:u2} and \eqref{eq:xi} yields
\[
\|u(k_0)-u_1\|_X\approx k_0^2\|\xi\|_X\approx \frac{k_0^2}{2}\|\NF(u_0)\|_X,
\]
where~$u_1=u_0+k_0\NF(u_0)$ is the first Newton iterate from~\eqref{eq:damped} (with~$n=0$). Recalling that~$u_1$ may also be seen as the forward Euler approximation (with step size~$k_0>0$) of the solution~$u$ of~\eqref{eq:davy} at~$t=k_0$, the above relation can be understood as the nodal error between the solution of~\eqref{eq:davy} and its numerical approximation after the first time step. Then, for a given error tolerance~$\tau>0$, choosing
\[
k_0=\sqrt{\frac{2\tau}{\|\NF(u_0)\|_X}},
\]
we arrive at $\|u(k_0)-u_1\|_X\approx\tau$,
i.e., the exact trajectory given by the solution of~\eqref{eq:davy} and its forward Euler approximation from~\eqref{eq:damped} remain~$\tau$-close in the~$\|.\|_X$-norm for the given time step~$k_0$.

Iterating the above observations leads to the following prediction strategy for the selection of~$k_n$ in~\eqref{eq:damped}. Incidentally, the resulting algorithm is identical with the one presented in~\cite[Algorithm~2.1]{AmWi14} although our derivation here is different.

\begin{algorithm}~\label{al:simple}
Fix a tolerance $ \tau>0 $.
\begin{enumerate}[(i)]
\item  Start the Newton iteration with an initial guess $ u_{0}\in\Xi$.
\item In each iteration step $ n=0,1,2,\ldots $, compute
\begin{equation}
\label{eq:k1}
k_{n}=\min\left(\sqrt{\frac{2\tau}{\norm{\NF(u_{n})}_{X}}},1\right).
\end{equation}
\item Compute~$u_{n+1}$ based on the Newton iteration~\eqref{eq:damped}, and go to (ii) with $ n\leftarrow n+1 $. 
\end{enumerate}
\end{algorithm}

\begin{remark}
The minimum in~\eqref{eq:k1} ensures that the step size~$k_n$ is chosen to be~1 whenever possible. Indeed, this is required in order to guarantee quadratic convergence of the Newton iteration close to a root (provided that the root is simple).
\end{remark}

\begin{remark}\label{rm:norm}
Under certain conditions it can been proved that the above algorithm does in fact converge to a zero of~$\F$; see~\cite[Theorem~2.4]{AmWi14}.
\end{remark}

\subsection{An Improved Prediction Strategy} 

In Section~\ref{sc:simple} our step size prediction strategy is based on approximating the solution of the Davydenko-type system~\eqref{eq:davy} by the use of~\eqref{eq:u2}. We can improve this approach by looking at the Taylor expansion
\begin{equation}\label{eq:u3}
u(t)=u_0+t\dot{u}(0)+\frac12t^2\ddot{u}(0)+\mathcal{O}(t^3)
\end{equation}
of the trajectory~$u$ defined by~\eqref{eq:davy}. Recalling~\eqref{eq:u0dot} we can replace~$\dot{u}(0)$ above by the Newton-Raphson transform~$\NF(u_0)$, however, we still need to find a good approximation of~$\ddot{u}(0)$. This can be accomplished by taking the derivative of~\eqref{eq:davy} with respect to~$t$ at~$t=0$. Applying the chain rule gives
\[
\ddot{u}(0)=\NF'(u_0)\dot{u}(0)=\NF'(u_0)\NF(u_0).
\]
Since it is preferable to avoid the explicit appearance of~$\NF'(u_0)$ we look at, for some small~$h>0$, the Taylor expansion
\[
\NF(u_0+h\NF(u_0))=\NF(u_0)+h\NF'(u_0)\NF(u_0)+\mathcal{O}(h^2\|\NF(u_0)\|_X^2).
\]
We conclude
\[
\ddot{u}(0)=\NF'(u_0)\NF(u_0)=\frac{1}{h}\eta_h+\mathcal{O}(h\|\NF(u_0)\|_X^2),
\]
with~$\eta_h=\NF(u_0+h\NF(u_0))-\NF(u_0)$. Inserting this identity into~\eqref{eq:u3} and employing~\eqref{eq:u0dot}, we arrive at
\[
u(t)=u_0+t\NF(u_0)+\frac{t^2}{2h}\eta_h+\mathcal{O}(t^3)+\mathcal{O}(t^2h\|\NF(u_0)\|_X^2).
\]
Hence, after the first time step of length~$k_0>0$ there holds
\begin{equation}\label{eq:O}
u(k_0)-u_1=\frac{k_0^2}{2h}\eta_h+\mathcal{O}(k_0^3)+\mathcal{O}(k_0^2h\|\NF(u_0)\|_X^2),
\end{equation}
where~$u_1$ is the forward Euler solution from~\eqref{eq:damped}. Then, for a prescribed tolerance~$\tau>0$ as before, we have~$\|u(k_0)-u_1\|_X\approx\tau$ if we set $k_0=\sqrt{2\tau h\|\eta_h\|_X^{-1}}$.
In order to balance the~$\mathcal{O}$-terms in~\eqref{eq:O} it is reasonable to make the choice~$h=\mathcal{O}(k_0\|\NF(u_0)\|_X^{-2})$, i.e.,
\begin{equation}\label{eq:h}
h=\gamma k_0\|\NF(u_0)\|_X^{-2},
\end{equation}
for some parameter~$\gamma>0$. 

With these calculations we can improve the previous Algorithm~\ref{al:simple} as follows:

\begin{algorithm}~\label{al:zs}
Fix a tolerance $ \tau>0$ and a parameter~$\gamma>0$, and set~$n=0$.
\begin{enumerate}[(i)]
\item Start the Newton iteration with an initial guess $ u_{0}\in\Xi$.
\item If $n=0$, then choose
\[
\kappa_0=\min\left(\sqrt{\frac{2\tau}{\norm{\NF(u_{0})}_{X}}},1\right),
\]
according to Algorithm~\ref{al:simple}, else if~$n\ge 1$, let
$\kappa_{n}=k_{n-1}$.
Moreover, set $h_n=\gamma \kappa_n\|\NF(u_n)\|_X^{-2}$ based on~\eqref{eq:h}, and define
\begin{equation}\label{eq:knew}
k_{n}=\min\left(\sqrt{\frac{2\tau h_n}{\norm{\NF(u_0+h_n\NF(u_0))-\NF(u_0)}_{X}}},1\right).
\end{equation}
\item Compute~$u_{n+1}$ based on the Newton iteration~\eqref{eq:damped}, and go to~(ii) with $n\leftarrow n+1 $. 
\end{enumerate}
\end{algorithm}
\begin{remark}
In contrast to the simple prediction strategy from Section~\ref{sc:simple}, Algorithm~\ref{al:zs} is based on the improved Taylor approximation~\eqref{eq:u3}. This will naturally lead to more reliable results in the adaptive Newton iteration, since the discrete system~\eqref{eq:disc} will supposedly follow the continuous dynamics of~\eqref{eq:davy} more closely. Evidently, the price to pay is one additional evaluation of the Newton-Raphson transform in each time step of the discrete dynamical system~\eqref{eq:damped}; cf.~\eqref{eq:knew}. This will roughly increase the complexity of Algorithm~\ref{al:simple} by a constant factor of~2.
\end{remark}

\begin{remark}
The preset tolerance~$ \tau $ in the above adaptive strategies will typically be fixed {\em a priori}. Here,  for highly nonlinear problems featuring numerous or even infinitely many solutions, it is recommendable to select~$\tau\ll 1$ small in order to increase the chances of remaining within the attractor of the given initial guess. This is particularly important if the starting value is relatively far away from a solution.
\end{remark}

\section{Application to Semilinear Elliptic Problems}\label{sc:SEP}

In order to apply an adaptive Newton-Raphson method as introduced in Section~\ref{sc:anewton} to the nonlinear PDE problem~\eqref{eq:F0}, note that the Fr\'echet-derivative of $ \F_{\varepsilon} $ from~\eqref{eq:F0} at~$u\in X$ is given, by
\begin{equation*}
\left \langle \F_{\varepsilon}'(u)w,v\right \rangle = \int_{\Omega}\left\{\varepsilon \nabla w\cdot \nabla v-f'(u)wv\right\}\dx,\qquad v,w\in X=H^1_0(\Omega).
\end{equation*}
We note that, if there is a constant~$\beta>0$ for which~$f'(u)\in L^{1+\beta(d-1)}(\Omega)$, then $\F_{\varepsilon}'(u)$ is a well-defined linear mapping from~$X$ to~$X'$; see~Lemma~\ref{lm:app}.

Then, given an initial guess~$u_0\in X$ for~\eqref{eq:F0}, the Newton method~\eqref{eq:damped} is to find~$u_{n+1}\in X$ from~$u_n\in X$, $n\ge 0$, such that
\begin{equation*}
\F_\varepsilon'(u_n)(u_{n+1}-u_n)=-k_n\F_\varepsilon(u_n),
\end{equation*}
in~$X'$. 
Equivalently,
\begin{equation}\label{eq:weak}
a_\varepsilon(u_n; u_{n+1},v)=a_\varepsilon(u_n; u_{n},v)-k_n\ell_\varepsilon(u_n;v)\qquad\forall v\in X,
\end{equation}
where, for {\em fixed}~$u\in X$, 
\begin{equation*}
\begin{aligned}
a_{\varepsilon}(u;w,v)&:=\int_{\Omega}\left\{\varepsilon \nabla w\cdot \nabla v-f'(u)wv\right\}\dx ,\\
l_{\varepsilon}(u;v)&:=\int_{\Omega}\left\{\varepsilon \nabla u\cdot \nabla v-f(u)v\right\}\dx
\end{aligned}
\end{equation*}
are bilinear and linear forms on~$X\times X$ and~$X$, respectively.

\begin{remark} Let us consider a special case, where the weak formulation~\eqref{eq:weak}, for given~$u_n$, always has a (unique) solution $u_{n+1} \in X $. To this end, we assume that there are constants~$\underline{\lambda},\overline{\lambda}\ge 0$ with $\varepsilon C_P^{-2}>\overline{\lambda}$ such that $ -\underline\lambda \leq f'(u) \leq \overline{\lambda}$ holds for all $u\in\mathbb{R} $. Here, $C_P=C_P(\Omega)>0$ is the constant in the Poincar\'e inequality on~$\Omega$:
\begin{equation}\label{eq:P}
\|w\|_{0}\le C_P\|\nabla w\|_{0}\qquad\forall w \in X.
\end{equation}
Then, for any given $ u_n \in X $ the linear problem~\eqref{eq:weak} has a unique solution~$u_{n+1}\in X$.
\end{remark}

\begin{proof}
Our goal is to apply the Lax-Milgram Lemma. For this purpose, we will show that~$a_\varepsilon(u_n;\cdot,\cdot)$ is a bounded and coercive bilinear form on~$X\times X$, and that $l_{\varepsilon}(u_n;\cdot)$ is a bounded linear form on~$X$.

By definition of the bilinear form~$a_\varepsilon(u_n;\cdot,\cdot)$ we have
\begin{align*}
a(u_n;w,w)&=\int_{\Omega}\{\varepsilon |\nabla w|^2-f'(u_n)w^2\}\dx\\
&=\varepsilon\int_{\Omega}{ |\nabla w|^2}\dx-\int_{M_f^\complement}{f'(u_n)w^2}\dx-\int_{M_f}{f'(u_n)w^2} \dx.
\end{align*}
Here, $M_f=\{\bm x\in\Omega:\,(f'\circ u_n)(\bm x)>0\}$.
Then,
\begin{equation}\label{eq:an1}
a(u_n;w,w)\geq  \varepsilon\int_{\Omega}{ |\nabla w|^2}\dx-\overline{\lambda}\int_{M_f}{w^2} \dx
\geq  \varepsilon\int_{\Omega}{ |\nabla w|^2}\dx-\overline{\lambda}\int_{\Omega}{w^2} \dx.
\end{equation}
Invoking the Poincar\'e inequality~\eqref{eq:an1} results in
$a(u_n;w,w)\geq (\varepsilon- \overline{\lambda}C_{p}^{2})\norm{\nabla w}_{0,\Omega}^{2}$,
which, by the equivalence of the~$H^1$-seminorm and the norm~$\NN{\cdot}_\varepsilon$ from~\eqref{eq:Norm} on~$X$ (resulting from the Poincar\'e inequality~\eqref{eq:P}), shows that~$a(u_n;\cdot,\cdot)$ is coercive by assumption on the difference $\varepsilon- \overline{\lambda}C_{p}^{2}>0$.

Furthermore, $a(u_n;\cdot,\cdot)$ is bounded. Indeed, for~$v,w\in X$ there holds
\begin{align*}
|a(u_n;v,w)|&\le \varepsilon \int_{\Omega}|\nabla w||\nabla v|\dx+\sup_{x\in\mathbb{R}}|f'(x)|\int_\Omega |w||v|\dx\\
&\le \varepsilon \int_{\Omega}|\nabla w||\nabla v|\dx+\max(\underline\lambda,\overline\lambda)\int_\Omega |w||v|\dx.
\end{align*}
Applying the Cauchy-Schwarz inequality, we obtain
\begin{equation*}
\begin{aligned}
|a(u_n;v,w)|
&\preccurlyeq\left(\varepsilon\|\nabla v\|^2_{0}+\|v\|^2_{0}\right)^{\nicefrac12}
\left(\varepsilon\|\nabla w\|^2_{0}+\|w\|^2_{0}\right)^{\nicefrac12}
=\NN{v}_{\varepsilon,\Omega}\NN{w}_{\varepsilon,\Omega},
\end{aligned}
\end{equation*}
which shows the continuity of $a(u_n;\cdot,\cdot) $.

Let us now focus on $\ell_\varepsilon(u_n;v)$:
For~$v\in X$, the Cauchy-Schwarz inequality yields
\begin{equation}{\label{bound2}}
|\ell_\varepsilon(u_n;v)|\le 
\left(\varepsilon\|\nabla u_n\|^2_{0}+\|f(u_n)\|^2_{0}\right)^{\nicefrac12}
\left(\varepsilon\|\nabla v\|^2_{0}+\|v\|^2_{0}\right)^{\nicefrac12}.
\end{equation}
Noting that by the Lipschitz continuity of $f$, there holds $\abs{f(u_n)}
\leq m\abs{u_n}+c$, with~$m=\max(\underline\lambda,\overline\lambda)$. Hence, we see that
\begin{align*}
\|f(u_n)\|^2_{0,\Omega} 
&\le \int_\Omega\left(m|u_n|+c\right)^2\dx
\le 2\int_\Omega\left(m^2|u_n|^{2}+c^2\right)\dx\\
&\le 2m^2\|u_n\|_{0}^{2}+2c^2|\Omega|
\preccurlyeq \NN{u_n}_{\varepsilon}^2+|\Omega|,
\end{align*}
for any~$u\in X$. Inserting into~\eqref{bound2} we end up with
\[
\norm{\ell_\varepsilon(u_n;\cdot)}_{X'}\preccurlyeq \left(\NN{u_n}_{\varepsilon,\Omega}^2+|\Omega|\right)^{\nicefrac12},
\]
i.e., the linear form~$v\mapsto \ell_\varepsilon(u_n;v)$ is bounded.

The above calculations show that, for any fixed~$u_n\in X$, the linear form $v\mapsto a_\varepsilon(u_n; u_n,v)-k_n\ell_\varepsilon(u_n;v)$ is bounded. Hence, recalling the coercivity and continuity of~$a_\varepsilon(u_n;\cdot,\cdot)$, the linear problem~\eqref{eq:weak} possesses a unique solution $u_{n+1} \in X$ by the Lax-Milgram Lemma.
\end{proof}

\section{Newton-Galerkin Finite Element Discretization}\label{sc:FEM}

In order to provide a numerical approximation of~\eqref{poisson}, we will discretize the weak formulation~\eqref{eq:weak} by means of a finite element method, which, in combination with the Newton-Raphson iteration, constitutes a Newton-Galerkin approximation scheme. Furthermore, we shall derive {\em a posteriori} error estimates for the finite element discretization which allow for an adaptive refinement of the meshes in each Newton step. This, together with the adaptive prediction strategies from Section~\ref{sc:anewton}, leads to a fully adaptive Newton-Galerkin discretization method for~\eqref{poisson}.

\subsection{Finite Element Meshes and Spaces}
   
Let $ \mathcal{T}_{h}=\{T\}_{T\in\mathcal{T}_h}$, be a regular and shape-regular mesh partition of $\Omega $ into disjoint open simplices, i.e., any~$T\in\mathcal{T}_h$ is an affine image of the (open) reference simplex~$\widehat T=\{\widehat x\in\mathbb{R}_+^d:\,\sum_{i=1}^d\widehat x_i<1\}$. By~$h_T=\mathrm{diam}(T)$ we signify the element diameter of~$T\in\mathcal{T}_h$, and by $h=\max_{T\in\mathcal{T}_h}h_T$ the mesh size. Furthermore, by $\mathcal{E}_{h}$ we denote the set of all interior mesh nodes for~$d=1$ and interior (open) edges for~$d=2$ in~$\mathcal{T}_{h}$. In addition, for~$T\in\mathcal{T}_h$, we let~$\mathcal{E}_h(T)=\{E\in\mathcal{E}_h:\,E\subset\partial T\}$. For~$E\in\mathcal{E}_h$, we let~$h_E$ be the mean of the lengths of the adjacent elements in 1d, and the length of~$E$ in~2d.

We consider the finite element space of continuous, piecewise linear functions on $\mathcal{T}_{h}$ with zero trace on~$\partial\Omega$ given by
\begin{equation*}
V_{0}^{h}:=\{\varphi\in H^1_0(\Omega):\,\varphi|_{T} \in \mathbb{P}_{1}(T) \, \forall T \in \mathcal{T}_{h}\},
\end{equation*}
where~$\mathbb{P}_1(T)$ is the standard space of all linear polynomial functions on~$T$. 

Moreover, for any function $  \varphi \in V^h $ and a given edge $ E \in \mathcal{E}_{h}=\partial T^\sharp\cap \partial T^\flat $ shared by two neighboring elements~$T^\sharp, T^\flat\in\mathcal{T}_h$, we denote by $\jmp{\varphi}_{E} $ the jump of $ \varphi $ across~$E$:
\[
\jmp{\varphi}_{E}(\bm x)=\lim_{t\to 0^+}\varphi(\bm x+t\bm n^\sharp)\bm n^\sharp+\lim_{t\to 0^+}\varphi(\bm x+t\bm n^\flat)\bm n^\flat \qquad \forall \bm x \in E.
\]
Here, $\bm n^\sharp$ and~$\bm n^\flat$ denote the unit outward vectors on~$\partial T^\sharp$ and~$\partial T^\flat$, respectively. 

Furthermore, for~$T\in\mathcal{T}_h$, and~$E\in\mathcal{E}_h$, we set 
\[
 \widetilde{w}_{T}:=\bigcup_{\genfrac{}{}{0pt}{}{T'\in\mathcal{T}_h:}{\overline T \cap \overline T'  \neq \emptyset}}{T'}, \qquad w_{E}:=\bigcup_{\genfrac{}{}{0pt}{}{T\in\mathcal{T}_h:}{E\subset\partial{T}}}{T}.
\]

\subsection{Approximation Results}

Let us recall the following classical quasi-interpo\-lation result.

\begin{proposition}\label{pr:clement}
Let $\I:\,H_{0}^{1}(\Omega)\rightarrow V_{0}^{h} $ be the quasi-interpolation Cl\'ement operator (see, e.g., \cite{Verfuerthbook}). Then, there holds the error estimate
\begin{equation*}
\norm{\nabla^l (v-\I v)}_{0,T}\preccurlyeq \ h_{T}^{k-l}\norm{\nabla^k v}_{0,\widetilde{w}_{T}}
\end{equation*}
for all $ T\in \mathcal{T}_{h} $, all $l,k \in \N $ with $ 0\leq l\leq k\leq 1 $, and all $ v \in H^{k}(\widetilde{w}_{T}) $.
\end{proposition}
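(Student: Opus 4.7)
The plan is to follow the standard construction and analysis of the Clément quasi-interpolant, as given in~\cite{Verfuerthbook}. For every mesh vertex $z$ in the interior of $\Omega$, let $\omega_z$ denote the union of elements in $\mathcal{T}_h$ meeting $z$, and let $\pi_z\colon L^2(\omega_z)\to\mathbb{P}_0(\omega_z)$ be the $L^2$-projection onto constants. Denoting by $\{\varphi_z\}$ the nodal hat-function basis of the enlarged piecewise-linear space on $\mathcal{T}_h$, we set
\[
\I v := \sum_{z \text{ interior vertex}} (\pi_z v)\, \varphi_z.
\]
The contributions from boundary vertices are omitted, which guarantees $\I v \in V_0^h$ and is consistent with extending $v \in H^1_0(\Omega)$ by zero beyond $\partial\Omega$.

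For any $T\in\mathcal{T}_h$ and any $x\in T$, only the $\varphi_z$ associated with the (finitely many, uniformly bounded) vertices of $T$ contribute. Using that $\sum_{z} \varphi_z(x) = 1$ on interior portions and that the boundary hat functions vanish at zero-trace vertices, I would write
\[
v(x) - \I v(x) = \sum_{z \text{ vertex of }T} \bigl(v(x) - \pi_z v\bigr)\, \varphi_z(x) \;+\; R(x),
\]
where $R$ collects boundary terms that are controlled by a Poincaré argument on $\omega_z$ for boundary vertices (using $v|_{\partial\Omega}=0$). Taking $L^2$-norms on $T$, bounding $\|\varphi_z\|_{L^\infty(T)}\le 1$, and summing over the $O(1)$ vertices of $T$ yields
\[
\norm{v-\I v}_{0,T} \preccurlyeq \sum_{z\in T} \norm{v-\pi_z v}_{0,\omega_z}.
\]

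The crucial estimate is then the local Bramble--Hilbert bound
\[
\norm{v - \pi_z v}_{0,\omega_z} \preccurlyeq h_z^{k}\norm{\nabla^k v}_{0,\omega_z},\qquad 0\le k\le 1,
\]
which I would obtain by mapping $\omega_z$ to a reference patch $\widehat\omega$ of unit size, applying the standard Deny--Lions/Bramble--Hilbert lemma on $\widehat\omega$ (using that $\pi_z$ reproduces constants), and scaling back; shape-regularity of $\mathcal{T}_h$ ensures the constants depend only on the reference geometry, and $h_z \simeq h_T$ for every vertex $z$ of $T$. Since $\omega_z \subset \widetilde{w}_T$ for each vertex $z$ of $T$, summing gives the stated bound for $l=0$.

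For the case $l=1$ (and hence $k=1$), I would use $\sum_{z} \nabla\varphi_z \equiv 0$ on interior regions to write
\[
\nabla(v-\I v) = \nabla v - \sum_{z} (\pi_z v)\nabla\varphi_z = \sum_{z} \bigl(v - \pi_z v\bigr)\nabla\varphi_z \;+\; \widetilde R,
\]
insert the inverse estimate $\|\nabla\varphi_z\|_{L^\infty(T)} \preccurlyeq h_T^{-1}$, and apply the same Bramble--Hilbert bound, yielding the factor $h_T^{k-l} = h_T^{0}$ for $l=k=1$ and $h_T^{k-l}=h_T$ for $l=0,k=1$. The main obstacle is really the careful handling of the boundary vertices so that $\I v$ lies in $V_0^h$ while the approximation estimate remains local and $h$-uniform; this is precisely where the zero-trace condition on $v$ and a Poincaré inequality on boundary patches are exploited. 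Since all other ingredients---shape-regularity, Bramble--Hilbert, and standard scaling---are classical, the proof ultimately reduces to assembling these pieces in the usual way.
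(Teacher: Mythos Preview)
The paper does not actually prove this proposition; it is stated as a classical result with a reference to~\cite{Verfuerthbook} and then used as a black box in the derivation of Corollary~\ref{co:appr}. Your sketch reproduces the standard construction and analysis of the Cl\'ement operator (local $L^2$-projection onto constants at vertices, partition-of-unity via hat functions, Bramble--Hilbert on reference patches, inverse estimate for the gradient case, and the Poincar\'e argument at boundary vertices), which is precisely the argument one finds in the cited reference. So there is no discrepancy in approach to discuss: you have supplied the textbook proof that the paper merely quotes.
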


In order to provide $\varepsilon$-robust approximation results, we follow the approach presented in~\cite{Verfuerth} (see also~\cite{MeWi14}). More precisely, recalling Proposition~\ref{pr:clement}, we have 
\[
 \norm{v-\I v}_{0,T}^2 \preccurlyeq \norm{v}_{0,\widetilde{w}_T}^2, \qquad  \norm{v-\I v}_{0,T}^2 \preccurlyeq \varepsilon^{-1}h_T^2\varepsilon \norm{\nabla v}_{0,\widetilde{w}_T}^2,
\]
for any~$T\in\mathcal{T}_h$. Thus, if we set 
\begin{equation}\label{eq:alphaT}
\alpha_{T}:=\min(1,\varepsilon^{-\nicefrac{1}{2}}h_T),
\end{equation}
we find
\begin{equation}
 \label{estimate1}
 \norm{v-\I v}_{0,T}\preccurlyeq \alpha_{T}\NN{v}_{\varepsilon,\widetilde{w}_T}.
\end{equation}
Furthermore, recalling the well-known multiplicative trace inequality,
\[
\|w\|_{0,E}^2\preccurlyeq h_T^{-1}\|w\|_{0,T}^2+\|w\|_{0,T}\|\nabla w\|_{0,T}\qquad\forall w\in H^1(T),\,\forall E\in\mathcal{E}_h(T),
\]
for any~$T\in\mathcal{T}_h$, we have
\[
\norm{v-\I v}_{0,E}^2\preccurlyeq h_T^{-1}\|v-\I v\|_{0,T}^2+\|v-\I v\|_{0,T}\|\nabla(v-\I v)\|_{0,T},
\]
for any~$E\in\mathcal{E}_h$ with~$E\subset\partial T$. Inserting~\eqref{estimate1} and employing Proposition~\ref{pr:clement}, we arrive at
\begin{align*}
\norm{v-\I v}_{0,E}^2
&\preccurlyeq h_T^{-1}\alpha_T^2\NN{v}^2_{\varepsilon,\widetilde{w}_T}+\alpha_T\NN{v}_{\varepsilon,\widetilde{w}_T}\|\nabla v\|_{0,\widetilde{w}_T}\\
&\preccurlyeq h_T^{-1}\alpha_T^2\NN{v}^2_{\varepsilon,\widetilde{w}_T}+\varepsilon^{-\nicefrac12}\alpha_T\NN{v}_{\varepsilon,\widetilde{w}_T}^2\\
&\preccurlyeq \left(h_T^{-1}\min(1,\varepsilon^{-\nicefrac12}h_T)^2+\varepsilon^{-\nicefrac12}\min(1,\varepsilon^{-\nicefrac12}h_T)\right)\NN{v}_{\varepsilon,\widetilde{w}_T}^2\\
&\preccurlyeq \min(1,\varepsilon^{-\nicefrac12}h_T)\left(\min(h_T^{-1},\varepsilon^{-\nicefrac12})+\varepsilon^{-\nicefrac12}\right)\NN{v}_{\varepsilon,\widetilde{w}_T}^2\\
&\preccurlyeq \varepsilon^{-\nicefrac12}\min(1,\varepsilon^{-\nicefrac12}h_T)\NN{v}_{\varepsilon,\widetilde{w}_T}^2
\end{align*}
Hence,
\[
\norm{v-\I v}_{0,E}\preccurlyeq \varepsilon^{-\nicefrac14}\alpha_T^{\nicefrac12}\NN{v}_{\varepsilon,\widetilde{w}_T},
\]
and by shape-regularity of the mesh~$\mathcal{T}_h$,
\[
\norm{v-\I v}_{0,E}\preccurlyeq \varepsilon^{-\nicefrac14}\alpha_E^{\nicefrac12}\NN{v}_{\varepsilon,\widetilde{w}_T},
\]
with
\begin{equation}\label{boundary}
\alpha_E:=\min(1,\varepsilon^{-\nicefrac12}h_E),\qquad E\in\mathcal{E}_h.
\end{equation}

Let us summarize the above estimates:
\begin{cor}\label{co:appr}
Let $\I:\,H_{0}^{1}(\Omega)\rightarrow V_{0}^{h} $ be the quasi-interpolation Cl\'ement operator from Proposition~\ref{pr:clement}. Then, for any elements~$T, T'\in\mathcal{T}_h$, and an edge~$E=\mathcal{E}_h(T)\cap\mathcal{E}_h(T')$, and any~$v\in H^1_0(\Omega)$, there hold the approximation bounds
\begin{align*}
\norm{v-\I v}_{0,T}&\preccurlyeq \alpha_{T}\NN{v}_{\varepsilon,\widetilde{w}_T},\\
\norm{v-\I v}_{0,E}&\preccurlyeq \frac12\varepsilon^{-\nicefrac14}\alpha_E^{\nicefrac12}\left(\NN{v}_{\varepsilon,\widetilde{w}_T}+\NN{v}_{\varepsilon,\widetilde{w}_{T'}}\right),
\end{align*}
where~$\alpha_T$ and~$\alpha_E$ are defined in~\eqref{eq:alphaT} and~\eqref{boundary}, respectively.
\end{cor}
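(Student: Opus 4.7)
The corollary is essentially a clean statement of the running estimates established in the paragraphs immediately preceding it, so my plan is simply to organise that calculation as a proof, filling in the two halves cleanly.

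For the volume bound, I would start from Proposition~\ref{pr:clement} applied in the two admissible cases $(k,l)=(0,0)$ and $(k,l)=(1,0)$, giving $\norm{v-\I v}_{0,T}\preccurlyeq\norm{v}_{0,\widetilde w_T}$ and $\norm{v-\I v}_{0,T}\preccurlyeq h_T\norm{\nabla v}_{0,\widetilde w_T}$. Rewriting the second as $\varepsilon^{-\nicefrac12}h_T\cdot\varepsilon^{\nicefrac12}\norm{\nabla v}_{0,\widetilde w_T}$ and noting that each of $\norm{v}_{0,\widetilde w_T}$ and $\varepsilon^{\nicefrac12}\norm{\nabla v}_{0,\widetilde w_T}$ is dominated by $\NN{v}_{\varepsilon,\widetilde w_T}$, taking the minimum of the two estimates produces the factor $\min(1,\varepsilon^{-\nicefrac12}h_T)=\alpha_T$, which is the first bound.

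For the edge bound, I would invoke the multiplicative trace inequality on an element $T$ with $E\subset\partial T$ applied to $w=v-\I v$,
\[
\norm{v-\I v}_{0,E}^{2}\preccurlyeq h_T^{-1}\norm{v-\I v}_{0,T}^{2}+\norm{v-\I v}_{0,T}\norm{\nabla(v-\I v)}_{0,T}.
\]
Into the first summand I substitute the volume estimate~\eqref{estimate1}, and into the second I use \eqref{estimate1} together with the Clément bound $\norm{\nabla(v-\I v)}_{0,T}\preccurlyeq\norm{\nabla v}_{0,\widetilde w_T}\le\varepsilon^{-\nicefrac12}\NN{v}_{\varepsilon,\widetilde w_T}$. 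This yields the intermediate estimate
\[
\norm{v-\I v}_{0,E}^{2}\preccurlyeq\bigl(h_T^{-1}\alpha_T^{2}+\varepsilon^{-\nicefrac12}\alpha_T\bigr)\NN{v}_{\varepsilon,\widetilde w_T}^{2}.
\]
Using $\alpha_T^{2}=\alpha_T\min(1,\varepsilon^{-\nicefrac12}h_T)$ and the elementary inequality $\min(h_T^{-1},\varepsilon^{-\nicefrac12})\le\varepsilon^{-\nicefrac12}$ collapses the bracket to a multiple of $\varepsilon^{-\nicefrac12}\alpha_T\NN{v}_{\varepsilon,\widetilde w_T}^{2}$, whence $\norm{v-\I v}_{0,E}\preccurlyeq\varepsilon^{-\nicefrac14}\alpha_T^{\nicefrac12}\NN{v}_{\varepsilon,\widetilde w_T}$. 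Shape-regularity implies $h_T\simeq h_E$ and therefore $\alpha_T\simeq\alpha_E$. Since the edge~$E$ lies on the boundary of both $T$ and $T'$, the same derivation applied to~$T'$ gives the analogue with $\widetilde w_{T'}$. Averaging the two bounds produces the symmetric form stated in the corollary.

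I do not expect any genuine obstacle: the only mildly non-routine step is the collapse of the $\bigl(h_T^{-1}\alpha_T^{2}+\varepsilon^{-\nicefrac12}\alpha_T\bigr)$ bracket via the min-manipulation, which has to be done carefully in the two regimes $h_T\le\varepsilon^{\nicefrac12}$ and $h_T>\varepsilon^{\nicefrac12}$ to verify that the factor $\varepsilon^{-\nicefrac12}\alpha_T$ indeed dominates. Everything else is a bookkeeping exercise combining Proposition~\ref{pr:clement}, the multiplicative trace inequality, the definition~\eqref{eq:Norm} of $\NN{\cdot}_{\varepsilon,\widetilde w_T}$, and shape-regularity.
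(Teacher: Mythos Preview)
Your proposal is correct and follows essentially the same route as the paper: the volume bound via the two Cl\'ement cases and the minimum, the edge bound via the multiplicative trace inequality combined with~\eqref{estimate1} and the gradient estimate, the collapse of the bracket to $\varepsilon^{-\nicefrac12}\alpha_T$, and the passage $\alpha_T\simeq\alpha_E$ by shape-regularity. The only addition you make explicit is the final averaging over $T$ and $T'$ to obtain the symmetric form with the factor~$\tfrac12$, which the paper simply absorbs into the statement of the corollary.
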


\subsection{Linear Finite Element Discretization}

We consider the finite element approximation of~\eqref{eq:weak} which is to find~$u_{n+1}^h\in V_{0}^h$ from a given $u_n\in X$, $n\ge 0$, (with~$u_0\in X$ being an initial guess) such that
\begin{equation}\label{eq:fem}
a_\varepsilon(u_n^h; u_{n+1}^h,v)=a_\varepsilon(u_n^h; u_{n}^h,v)-t\ell_\varepsilon(u_n^h;v)\qquad\forall v\in V_{0}^{h}.
\end{equation}
Here, $t$ takes the role of a parameter which corresponds to the step size in the adaptive Newton scheme. 
Introducing
\begin{equation}\label{eq:ut}
u_{n+1}^{(t,h)}:=u_{n+1}^h-(1-t)u_{n}^h
\end{equation}
and
\begin{equation}\label{eq:ft}
f^t(u_{n+1}^h):=tf(u_{n}^h)+f'(u_{n}^{h})(u_{n+1}^h-u_n^h),
\end{equation}
and rearranging terms, \eqref{eq:fem} can be rewritten as

\begin{equation}
 \label{start}
 \begin{aligned}
\varepsilon\int_{\Omega}{ \nabla u_{n+1}^{(t,h)} \cdot\nabla v} \dx& = \int_{\Omega}{ f^t (u_{n+1}^h) v }\dx \qquad \forall v  \in V_{0}^h.
\end{aligned}
\end{equation}

\subsection{{\em A Posteriori} Error Analysis}

The aim of this section is to derive {\em a posteriori} error bounds for~\eqref{start}.

\subsubsection{Upper Bound}
In order to measure the error between the finite element discretization~\eqref{eq:fem} and the original problem~\eqref{poisson}, a natural quantity to bound is the residual~$\F_{\varepsilon}(u_{n+1}^{h})$ in~$X'$. In order to proceed in this direction, we notice that the adaptively chosen damping parameter~$t$ in the Newton-Raphson method~\eqref{eq:fem} will equal~1 
sufficiently close to a root of~$\F_\varepsilon$. For this reason, we may focus on the `shifted' residual~$\F_{\varepsilon}(u_{n+1}^{(t,h)})$ in~$X'$ instead. 
To do so, let~$v\in H^1_0(\Omega)$. We begin with~\eqref{start}, which implies that
\[
\int_{\Omega}\left\{\varepsilon \nabla u_{n+1}^{(t,h)} \cdot\nabla \I v- f^t (u_{n+1}^h) \I v\right\}\dx=0,
\]
where~$\I v$ is the quasi-interpolant from Proposition~\ref{pr:clement}. Then,
\begin{align*}
\left \langle \F_{\varepsilon}(u_{n+1}^{(t,h)}),v \right \rangle 
&=\int_\Omega\left\{\varepsilon\nabla u_{n+1}^{(t,h)}\cdot\nabla v-f(u_{n+1}^{(t,h)})v\right\}\dx\\
&=\varepsilon\int_\Omega\nabla u_{n+1}^{(t,h)}\cdot\nabla (v-\I v)\dx
+\int_\Omega\left\{f^t (u_{n+1}^h) \I v-f(u_{n+1}^{(t,h)})v\right\}\dx\\
&=\varepsilon\int_\Omega\nabla u_{n+1}^{(t,h)}\cdot\nabla (v-\I v)\dx
+\int_\Omega f^t (u_{n+1}^h) (\I v-v)\dx\\
&\quad+\int_\Omega\left\{f^t (u_{n+1}^h)-f(u_{n+1}^{(t,h)})\right\}v\dx.
\end{align*}
Integrating by parts elementwise in the first term yields
\begin{align*}
\int_\Omega\nabla u_{n+1}^{(t,h)}\cdot\nabla (v-\I v)\dx
&=-\sum_{T\in\mathcal{T}_h}\int_T\Delta u_{n+1}^{(t,h)}(v-\I v)\dx\\
&\quad+\sum_{T\in\mathcal{T}_h}\int_{\partial T}(\nabla u_{n+1}^{(t,h)}\cdot\bm n_T)(v-\I v)\ds.
\end{align*}
An elementary calculation, recalling the fact that~$(v-\I v)|_{\partial\Omega}=0$, shows that
\[
\sum_{T\in\mathcal{T}_h}\int_{\partial T}(\nabla u_{n+1}^{(t,h)}\cdot\bm n_T)(v-\I v)\ds
=\sum_{E\in\mathcal{E}_h}\int_E\jmp{\nabla u_{n+1}^{(t,h)}}(v-\I v)\ds.
\]
Therefore, we have the following result:

\begin{proposition}
Given~$u_{n+1}^{(t,h)}$ and~$f^t(u_{n+1}^h)$ from~\eqref{eq:ut} and~\eqref{eq:ft}, respectively. Then, there holds the identity
\begin{equation}\label{eq:Fres}
\left \langle \F_{\varepsilon}(u_{n+1}^{(t,h)}),v \right \rangle
=\sum_{E\in\mathcal{E}_h}a_E+\sum_{T\in\mathcal{T}_h}(b_T+c_T),
\end{equation}
where
\begin{align}
a_{E}&:= \int_{E}\varepsilon\jmp{\nabla u_{n+1}^{(t,h)}}(v-\I v)\ds,\qquad
c_{T}:=\int_{T}{\left\{f^t(u_{n+1}^h)-f(u_{n+1}^{(t,h)})\right\}v}\dx,\nonumber\\
b_{T}&:=\int_{T}{\left\{f^t(u_{n+1}^h)+\varepsilon \Delta u_{n+1}^{(t,h)}\right\}(\I v-v)}\dx,\label{bounds}
\end{align}
with~$E\in\mathcal{E}_h$, $T\in\mathcal{T}_h$.
\end{proposition}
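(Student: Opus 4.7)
The statement is essentially a consolidation of the manipulations sketched in the paragraphs preceding it, and my plan is to reproduce those steps cleanly in a single derivation. I start from the definition of $\F_\varepsilon$ in \eqref{eq:Fweak} evaluated at $u_{n+1}^{(t,h)}$, and then subtract the Galerkin identity \eqref{start} tested against the quasi-interpolant $\I v \in V_0^h$ (which is an admissible test function). Subtracting produces a representation in which $v$ is replaced by $v - \I v$ in the diffusion term and in the $f^t(u_{n+1}^h)$ contribution, plus a remainder term of the form $\int_\Omega \bigl(f^t(u_{n+1}^h)-f(u_{n+1}^{(t,h)})\bigr) v\dx$. Split elementwise, the last quantity is exactly $\sum_{T\in\mathcal{T}_h} c_T$.

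Next, I would integrate by parts elementwise in the diffusion term to convert
$\varepsilon\int_T \nabla u_{n+1}^{(t,h)}\cdot \nabla(v-\I v)\dx$
into a bulk contribution $-\varepsilon\int_T \Delta u_{n+1}^{(t,h)}(v-\I v)\dx$ plus a boundary contribution $\varepsilon\int_{\partial T}(\nabla u_{n+1}^{(t,h)}\cdot \bm n_T)(v-\I v)\ds$. The bulk part has the sign structure $\varepsilon\Delta u_{n+1}^{(t,h)}(\I v - v)$, so adding it to the already-present term $f^t(u_{n+1}^h)(\I v - v)$ and summing over $T\in\mathcal{T}_h$ assembles exactly $\sum_T b_T$ as defined in \eqref{bounds}.

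For the edge contributions, I would use the standard argument: each interior edge $E \in \mathcal{E}_h$ belongs to two neighbouring elements $T^\sharp,T^\flat$ with opposite outward normals, so summing $(\nabla u_{n+1}^{(t,h)}\cdot \bm n_{T^\sharp})+(\nabla u_{n+1}^{(t,h)}\cdot \bm n_{T^\flat})$ against the common factor $(v-\I v)$ produces precisely $\jmp{\nabla u_{n+1}^{(t,h)}}_E(v-\I v)$. Boundary edges on $\partial\Omega$ drop out because $v, \I v \in H^1_0(\Omega)$, so $(v-\I v)|_{\partial\Omega}=0$. Multiplying by $\varepsilon$ and summing yields $\sum_{E\in\mathcal{E}_h} a_E$, which finishes the identity.

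The work is purely algebraic rearrangement together with elementwise integration by parts; no estimates or approximation arguments are invoked. The only real bookkeeping step is the sign convention when combining the two one-sided traces into the jump $\jmp{\cdot}_E$, and verifying that the two occurrences of $(v-\I v)$ versus $(\I v - v)$ match up the bulk integrand with the definition of $b_T$; I do not expect any substantive obstacle.
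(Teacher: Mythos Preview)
Your proposal is correct and follows essentially the same route as the paper: start from the Galerkin identity \eqref{start} tested with $\I v$, subtract it from the definition of $\F_\varepsilon(u_{n+1}^{(t,h)})$ applied to $v$, add and subtract $f^t(u_{n+1}^h)v$ to isolate the $c_T$ terms, then integrate by parts elementwise and collapse the interelement boundary contributions into jumps using $(v-\I v)|_{\partial\Omega}=0$. The only cosmetic difference is that the paper writes out the intermediate add/subtract of $f^t(u_{n+1}^h)v$ explicitly before integrating by parts, whereas you phrase it as a single subtraction step; the content is identical.
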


Now, for~$T\in\mathcal{T}_h$, defining
\begin{equation}
\label{Newtonerror}
\delta_{n,T}:=\norm{f^{t}(u_{n+1}^h)-f(u_{n+1}^{(t,h)})}_{0,T},
\end{equation}
and
\begin{equation}
\label{Femerror}
\eta_{n,T}^2:= \alpha_{T}^2 \norm{f^t(u_{n+1}^h)+\varepsilon \Delta u_{n+1}^{(t,h)}}_{0,T}^2+\frac{1}{2}\sum_{E\in \mathcal{E}_{h}(T)}{\varepsilon^{-\nicefrac12}\alpha_E\norm{\varepsilon \jmp{\nabla u_{n+1}^{(t,h)} }}_{0,E}^2},
\end{equation}
with~$\alpha_T$ and~$\alpha_E$ from~\eqref{eq:alphaT} and~\eqref{boundary}, respectively,
we are ready to prove an upper {\em a posteriori} bound on the (shifted) residual.

\begin{theorem}
\label{thm:1}
Consider~$u_{n+1}^{(t,h)}$ from~\eqref{eq:ut}. Then, there holds the upper bound:
\begin{equation}
\label{upperbound}
\NN{\F(u_{n+1}^{(t,h)})}_{X',\varepsilon}^2
\preccurlyeq \delta_{n,\Omega}^2+\sum_{T\in \mathcal{T}_h}{\eta_{n,T}^{2}},
\end{equation}
with~$\delta_{n,\Omega}$ and~$\eta_{T,n}$ from~\eqref{Newtonerror} and~\eqref{Femerror}, respectively.
\end{theorem}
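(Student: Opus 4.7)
The plan is to bound the dual norm $\NN{\F_\varepsilon(u_{n+1}^{(t,h)})}_{X',\varepsilon}$ by starting from the representation \eqref{eq:Fres}: pick an arbitrary test function $v \in X$ with $\NN{v}_\varepsilon = 1$, bound $\left|\left\langle \F_{\varepsilon}(u_{n+1}^{(t,h)}),v \right\rangle\right|$ termwise, and finally take the supremum over all such $v$. The Galerkin orthogonality has already been exploited in deriving \eqref{eq:Fres}: the Cl\'ement interpolant $\I v$ has been subtracted so that the approximation bounds of Corollary~\ref{co:appr} are directly applicable.

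For the elementwise residual terms I would apply Cauchy-Schwarz in $L^2(T)$ and invoke the first bound of Corollary~\ref{co:appr}, giving
\[
|b_T| \preccurlyeq \alpha_T \norm{f^t(u_{n+1}^h)+\varepsilon \Delta u_{n+1}^{(t,h)}}_{0,T} \NN{v}_{\varepsilon,\widetilde{w}_T}.
\]
For the edge terms, Cauchy-Schwarz on $E$ together with the second bound of Corollary~\ref{co:appr} (applied to both neighbours $T^\sharp, T^\flat$ of $E$) yields
\[
|a_E| \preccurlyeq \varepsilon^{-\nicefrac14}\alpha_E^{\nicefrac12}\norm{\varepsilon\jmp{\nabla u_{n+1}^{(t,h)}}}_{0,E}\left(\NN{v}_{\varepsilon,\widetilde{w}_{T^\sharp}}+\NN{v}_{\varepsilon,\widetilde{w}_{T^\flat}}\right);
\]
squaring produces precisely the factor $\varepsilon^{-\nicefrac12}\alpha_E$ that appears in the edge contribution of $\eta_{n,T}^2$ in \eqref{Femerror}. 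The remaining term $c_T$ is the simplest: Cauchy-Schwarz gives $|c_T| \leq \delta_{n,T}\|v\|_{0,T} \leq \delta_{n,T}\NN{v}_{\varepsilon,T}$, directly from the definition of $\NN{\cdot}_\varepsilon$.

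To sum, I would apply a discrete Cauchy-Schwarz inequality separately to $\sum_T (b_T+c_T)$ and to $\sum_E a_E$, and then use the shape-regularity of $\mathcal{T}_h$ to bound the overlaps $\sum_T \NN{v}_{\varepsilon,\widetilde{w}_T}^2 \preccurlyeq \NN{v}_{\varepsilon}^2 = 1$. The edge sum regroups into a sum over elements through $\sum_T\sum_{E \in \mathcal{E}_h(T)} = 2\sum_E$, which is exactly compensated by the factor $\nicefrac12$ in \eqref{Femerror}. Combining everything gives
\[
\left|\left\langle \F_{\varepsilon}(u_{n+1}^{(t,h)}),v \right\rangle\right| \preccurlyeq \left(\delta_{n,\Omega}^2 + \sum_{T\in\mathcal{T}_h}\eta_{n,T}^2\right)^{\nicefrac12},
\]
and taking the supremum over $v$ finishes the proof. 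The main technical obstacle I anticipate is the $\varepsilon$-bookkeeping on the jump contribution — confirming that the weight $\varepsilon$ on $\jmp{\nabla u_{n+1}^{(t,h)}}$ coming from the Laplacian, combined with the $\varepsilon^{-\nicefrac14}$ from the edge interpolation estimate, produces exactly the $\varepsilon$-robust factor $\varepsilon^{-\nicefrac12}\alpha_E$ used in \eqref{Femerror}; beyond that the argument is routine Cauchy-Schwarz and patch counting.
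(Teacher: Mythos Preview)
Your proposal is correct and follows essentially the same route as the paper: start from the residual identity \eqref{eq:Fres}, bound $a_E$, $b_T$, $c_T$ individually via Cauchy--Schwarz and the Cl\'ement bounds of Corollary~\ref{co:appr}, then apply a discrete Cauchy--Schwarz together with the finite-overlap property $\sum_T \NN{v}_{\varepsilon,\widetilde w_T}^2 \preccurlyeq \NN{v}_\varepsilon^2$. Your $\varepsilon$-bookkeeping on the edge term is also correct: writing the jump factor as $\varepsilon^{-\nicefrac14}\alpha_E^{\nicefrac12}\norm{\varepsilon\jmp{\nabla u_{n+1}^{(t,h)}}}_{0,E}$ is just a regrouping of the paper's $\varepsilon^{\nicefrac34}\alpha_E^{\nicefrac12}\norm{\jmp{\nabla u_{n+1}^{(t,h)}}}_{0,E}$, and squaring indeed produces the weight $\varepsilon^{-\nicefrac12}\alpha_E$ in~\eqref{Femerror}.
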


\begin{proof}
First let~$E\in\mathcal{E}_h$ and~$T,T'\in\mathcal{T}_h$ with~$E=\mathcal{E}_h(T)\cap\mathcal{E}_H(T')$.
Then, $a_E$ from~\eqref{bounds} can be estimated using Corollary~\ref{co:appr} as follows:
\begin{align*}
|a_E|&\le \varepsilon\norm{\jmp{\nabla u_{n+1}^{(t,h)}}}_{0,E}\norm{v-\I v}_{0,E}\\
&\preccurlyeq\frac12\varepsilon^{\nicefrac34}\alpha_E^{\nicefrac12}\norm{\jmp{\nabla u_{n+1}^{(t,h)}}}_{0,E}\left(\NN{v}_{\varepsilon,\widetilde{w}_T}+\NN{v}_{\varepsilon,\widetilde{w}_{T'}}\right).
\end{align*}
Applying the Cauchy-Schwarz inequality leads to
\begin{align*}
\left|\sum_{E\in\mathcal{E}_h}a_E\right|
&\preccurlyeq\left(\frac12\sum_{E\in \mathcal{E}_{h}}{\varepsilon^{\nicefrac32}\alpha_{E} \norm{\jmp{\nabla u_{n+1}^{(t,h)}}}_{0,E}^2}\right)^{\nicefrac12}
\NN{v}_{\varepsilon,\Omega}.
\end{align*}
Furthermore, again using Corollary~\ref{co:appr}, we see that
\begin{align*}
\left|\sum_{T\in\mathcal{T}_h}b_T\right|
&\preccurlyeq \sum_{T\in\mathcal{T}_h}\alpha_T\norm{f^t(u_{n+1}^h)+\varepsilon \Delta u_{n+1}^{(t,h)}}_{0,T}\NN{v}_{\varepsilon,\widetilde{w}_T}\\
&\preccurlyeq\left(\sum_{T\in\mathcal{T}_h}\alpha_T^2\norm{f^t(u_{n+1}^h)+\varepsilon \Delta u_{n+1}^{(t,h)}}_{0,T}^2\right)^{\nicefrac12}\NN{v}_{\varepsilon,\Omega}.
\end{align*}
Similarly, there holds
\begin{align*}
\left|\sum_{T\in\mathcal{T}_h}c_T\right|&
\preccurlyeq\sum_{T\in\mathcal{T}_h} \norm{f^t(u_{n+1}^h)-f(u_{n+1}^{(t,h)})}_{0,T}\norm{v}_{0,T}\\
&\preccurlyeq\left(\sum_{T\in\mathcal{T}_h}\norm{f^t(u_{n+1}^h)-f(u_{n+1}^{(t,h)})}_{0,T}^2\right)^{\nicefrac12}\NN{v}_{\varepsilon,\Omega}.
\end{align*}

Now, applying the Cauchy-Schwarz inequality to~\eqref{eq:Fres} we see that
\[
\begin{aligned}
\abs{\left \langle \F_{\varepsilon}(u_{n+1}^{(t,h)}),v \right \rangle}
&\preccurlyeq\sum_{E\in\mathcal{E}_h}|a_E|+\sum_{T\in\mathcal{T}_h}|b_T|+\sum_{T\in\mathcal{T}_h}|c_T|\\
&\preccurlyeq \left(\delta_{n,\Omega}^2+\sum_{T\in \mathcal{T}_h}{\eta_{n,T}^{2}}\right)^{\nicefrac{1}{2}}\NN{v}_{\varepsilon,\Omega}.
\end{aligned}
\]
Dividing by~$\NN{v}_{\varepsilon,\Omega}$, and taking the supremum for all~$v\in H^1_0(\Omega)$, completes the proof.
\end{proof}

\begin{remark}\label{rm:4.5}
Under certain conditions on the nonlinearity~$f$ in~\eqref{poisson}, the right-hand side of \eqref{upperbound} is equivalent to~$\NN{u-u_{n+1}^{(t,h)}}_{\varepsilon,\Omega}$. To explain this, for $v,w\in X$, we notice that
\begin{align*}
\left \langle \F_{\varepsilon}(v)-\F_{\varepsilon}(w),v-w  \right \rangle &=\int_{\Omega}{\left\{\varepsilon |\nabla{(v-w)}|^2-(f(v)-f(w))(v-w)\right\}}\dx.
\end{align*}
Then, supposing that there exists a constant~$\lambda>-C_P^{-2}\varepsilon$, where~$C_P$ is the Poincar\'e constant from~\eqref{eq:P}, such that~$(f(x)-f(y))(x-y)\le-\lambda(x-y)^2$ for all~$x,y\in\mathbb{R}$, we conclude that
\begin{align*}
\langle \F_{\varepsilon}(v)-\F_{\varepsilon}(w),v-w  \rangle 
&\geq \int_{\Omega}{\left\{\varepsilon |\nabla{(v-w)}|^2+\lambda (v-w)^2\right\}}\dx.
\end{align*}
From this, for~$\beta\ge 0$, it follows that
\begin{align*}
\langle \F_{\varepsilon}(v)&-\F_{\varepsilon}(w),v-w  \rangle \\
&\geq \beta\varepsilon \norm{\nabla{(v-w)}}_{0,\Omega}^{2}+(1-\beta)\varepsilon \norm{\nabla{(v-w)}}_{0,\Omega}^{2}+\lambda\norm{v-w}_{0,\Omega}^{2}\\
&\ge\beta C_P^{-2}\varepsilon \norm{v-w}_{0,\Omega}^{2}+(1-\beta)\varepsilon \norm{\nabla{(v-w)}}_{0,\Omega}^{2}+\min(1,\lambda) \norm{v-w}_{0,\Omega}^{2}\\
&\ge\min(1-\beta,\beta C_P^{-2}\varepsilon+\min(1,\lambda)) \NN{v-w}_{\varepsilon,\Omega}^2.
\end{align*}
Choosing~$\beta=(C_P^{-2}\varepsilon+1)^{-1}(1-\min(1,\lambda))$, it holds
\begin{equation}\label{eq:low}
\langle \F_{\varepsilon}(v)-\F_{\varepsilon}(w),v-w  \rangle
\ge \frac{C_P^{-2}\varepsilon+\min(1,\lambda)}{C_P^{-2}\varepsilon+1}\NN{v-w}_{\varepsilon,\Omega}^2.
\end{equation}
By assumption on~$\lambda$, the constant on the right-hand side in the above inequality is positive. Moreover, if there exists a constant~$L>0$ such that there holds the Lipschitz condition $|f(x)-f(y)|\le L|x-y|$ for all~$x,y\in\mathbb{R}$, then, 
for $ z \in X $, we observe that
\[
 \begin{aligned}
\abs{\left \langle \F_{\varepsilon}(v)-\F_{\varepsilon}(w),z \right \rangle}
&\leq \int_{\Omega}{\left\{\varepsilon \abs{\nabla{(v-w)}\cdot\nabla{z}}+\abs{(f(v)-f(w))z}\right\}}\dx\\
&\leq \int_{\Omega}{\left\{\varepsilon \abs{\nabla{(v-w)}}\abs{\nabla{z}}+L|v-w||z|\right\}}\dx.
\end{aligned}
\]
Using the Cauchy-Schwarz inequality, yields
\begin{align}
\abs{\left \langle \F_{\varepsilon}(v)-\F_{\varepsilon}(w),z \right \rangle}
&\leq \big(\varepsilon \norm{\nabla{(v-w)}}_{0}^{2}+L^2\norm{v-w}_{0}^2\big)^{\nicefrac12}\big(\varepsilon \norm{\nabla{z}}_{0}^{2}+\norm{z}_{0}^2\big)^{\nicefrac12}\nonumber\\
&\leq \max(1,L)\NN{v-w}_{\varepsilon,\Omega}\NN{z }_{\varepsilon,\Omega}.\label{eq:up}
\end{align}

Now, if~$u$ is the exact solution of~\eqref{poisson}, and~$u_{n+1}^{(t,h)}\neq u$ from~\eqref{eq:ut}, then~\eqref{eq:low} implies that
\begin{align*}
\langle\F_\varepsilon(u_{n+1}^{(t,h)}),u_{n+1}^{(t,h)}-u\rangle
&=\langle\F_\varepsilon(u_{n+1}^{(t,h)})-\F_\varepsilon(u),u_{n+1}^{(t,h)}-u\rangle\\
&\ge \frac{C_P^{-2}\varepsilon+\min(1,\lambda)}{C_P^{-2}\varepsilon+1}\NN{u-u_{n+1}^{(t,h)}}_{\varepsilon,\Omega}^2,
\end{align*}
and thus,
\begin{align*}
\NN{u-u_{n+1}^{(t,h)} }_{\varepsilon,\Omega}
&\le \frac{C_P^{-2}\varepsilon+1}{C_P^{-2}\varepsilon+\min(1,\lambda)}\frac{\langle\F_\varepsilon(u_{n+1}^{(t,h)}),u_{n+1}^{(t,h)}-u\rangle}{\NN{u-u_{n+1}^{(t,h)} }_{\varepsilon,\Omega}}\\
&\le \frac{C_P^{-2}\varepsilon+1}{C_P^{-2}\varepsilon+\min(1,\lambda)}\NN{\F_\varepsilon(u_{n+1}^{(t,h)})}_{X',\varepsilon}.
\end{align*}
Incidentally, this bound can be estimated further by means of~\eqref{upperbound}. Conversely, we notice that~\eqref{eq:up} leads to
\begin{align*}
\NN{\F_\varepsilon(u_{n+1}^{(t,h)})}_{X',\varepsilon}
&=\sup_{0\not\equiv v\in X}\frac{\langle\F_\varepsilon(u_{n+1}^{(t,h)})-\F_\varepsilon(u),v\rangle}{\NN{v}_{\varepsilon,\Omega}}
\le\max(1,L)\NN{u-u_{n+1}^{(t,h)}}_{\varepsilon,\Omega}.
\end{align*}
This gives the equivalence of the residual and the error norm~$\NN{u-u_{n+1}^{(t,h)}}_{\varepsilon,\Omega}$.
\end{remark}


\subsubsection{Lower Bounds}

Let us sketch how $\varepsilon$-robust local lower error bounds can be derived. To this end, consider $E=\mathcal{E}_h(T)\cap\mathcal{E}_h(T')\in\mathcal{E}_h$, with $ T,T' \in \mathcal{T}_{h} $. Then, elementwise integration by parts on~$w_E$ yields
\begin{equation*}
\int_{E}{\jmp{\nabla u_{n+1}^{(t,h)}}}v\ds=\int_{w_{E}}{\nabla u_{n+1}^{(t,h)}\cdot\nabla v}\dx+\int_{w_{E}}{v\Delta u_{n+1}^{(t,h)}}\dx \quad \forall v \in H^1_0(w_{E}).
\end{equation*}
Therefore, for all~$v\in H^1_0(w_{E})$, we obtain
\begin{equation*}
\begin{aligned}
 \int_{E}&{\varepsilon \jmp{\nabla u_{n+1}^{(t,h)}}}v\ds \\
 &= \left \langle \F_{\varepsilon}(u_{n+1}^{(t,h)}),v \right \rangle+\int_{w_E}{\{f(u_{n+1}^{(t,h)})-f^{t}(u_{n+1}^h)\}v}\dx\\
&\quad+\int_{w_E}{\{f^{t}(u_{n+1}^h)-\hat{f}^{t}(u_{n+1}^h)\}v}\dx+\int_{w_E}{\{\hat{f}^{t}(u_{n+1}^h)+\varepsilon \Delta u_{n+1}^{(t,h)}\}v}\dx,
\end{aligned}
\end{equation*}
where~$\hat{f}^{t}(u_{n+1}^h)\in V^h_0$ is the~$L^2$-projection of~$f^{t}(u_{n+1}^h)$ onto~$V_0^h$. Especially, for~$v\in H^1_0(T)$, where~$T\in\mathcal{T}_h$, this implies that
\begin{equation*}
\begin{aligned}
-\int_{T}{\{\hat{f}^{t}(u_{n+1}^h)+\varepsilon \Delta u_{n+1}^{(t,h)}\}v}\dx
 &= \left \langle \F_{\varepsilon}(u_{n+1}^{(t,h)}),v \right \rangle+\int_{T}{\{f(u_{n+1}^{(t,h)})-f^{t}(u_{n+1}^h)\}v}\dx\\
&\quad+\int_{T}{\{f^{t}(u_{n+1}^h)-\hat{f}^{t}(u_{n+1}^h)\}v}\dx.
\end{aligned}
\end{equation*}

Then, proceeding along the lines of~\cite{Verfuerth} by using suitable bubble function techniques, the following bounds can be proved:
\begin{equation*}
\alpha_{T}\norm{\hat{f}^{t}(u_{n+1}^h)+\varepsilon \Delta u_{n+1}^{(t,h)}}_{0,T}  \preccurlyeq  \NN{\F_{\varepsilon}(u_{n+1}^{(t,h)})}_{\varepsilon,H^1_0(T)'} + \alpha_{T}(\delta_{n,T}+\hat{\delta}_{n,T}),
\end{equation*}
and
\begin{equation*}
\begin{aligned}
 \varepsilon^{-\nicefrac14} \alpha_{E}^{\nicefrac12}&\norm{\varepsilon \jmp{\nabla u_{n+1}^{(t,h)}}}_{0,E}\preccurlyeq  \NN{\F_{\varepsilon}(u_{n+1}^{(t,h)})}_{\varepsilon, H^1_0(w_E)'} +\sum_{T\in w_{E}}{\alpha_{T}(\delta_{n,T}+\hat{\delta}_{n,T})},
\end{aligned}
\end{equation*}
where, for a subset~$D\subseteq\Omega$, we let
\[
\NN{\F_\varepsilon(u)}_{\varepsilon,H^1_0(D)'} =  \sup_{\genfrac{}{}{0pt}{}{v\in H^1_0(D)}{\NN{v}_{\varepsilon,D}=1}}\int_{D}\left\{\varepsilon \nabla u\cdot \nabla v-f(u)v\right\}\dx,\qquad u\in X.
\]
Here, for~$T\in\mathcal{T}_h$,
\[
\hat\delta_{n,T}:=\norm{f^{t}(u_{n+1}^h)-\hat f^t(u_{n+1}^{h})}_{0,T}
\]
is a data oscillation term. Moreover, $\delta_{n,T}$ was introduced in~\eqref{Newtonerror}, and~$\alpha_T$ and~$\alpha_E$ were defined in~\eqref{eq:alphaT} and~\eqref{boundary}, respectively.

\subsection{A Fully Adaptive Newton-Galerkin Algorithm}
We will now propose a procedure that will combine the adaptive Newton methods presented in~Section~\ref{sc:anewton} with automatic finite element mesh refinements based on the {\em a posteriori} error estimate from Theorem~\ref{thm:1}. To this end, we make the assumption that the Newton-Raphson sequence $\left\{u_{n+1}^{(k_{n},h)}\right\}_{n\ge 0} $ given by~\eqref{eq:fem} and~\eqref{eq:ut}, with a step size~$t=k_n$, is well-defined as long as the iterations are being performed. 

\begin{algorithm}\label{al:full}
Given a parameter~$\theta>0$, a (coarse) starting mesh $ \mathcal{T}_{h} $ in~$\Omega$, and an initial guess $ u_{0}^{h} \in  V_{0}^{h} $. Set~$n:=0$.
\begin{enumerate}
\item
Determine the Newton-Raphson step size parameter $k_n$ based on~$u_n^h$ by one of the adaptive procedures from Section~\ref{sc:anewton}. 
\item Compute the FEM solution~$u_{n+1}^h$ from~\eqref{eq:fem} with step size~$t=k_n$ on the mesh~$\mathcal{T}_h$. Furthermore, obtain~$ u_{n+1}^{(k_{n},h)} $ in~\eqref{eq:ut}, and evaluate the corresponding error indicators $ \eta_{T,n} $, $ T \in \mathcal{T}_{h} $, and $\delta_{n,\Omega} $ from~\eqref{Femerror} and~\eqref{Newtonerror}, respectively.  
\item 
If there holds
\begin{equation}\label{eq:test}
\delta_{n,\Omega}^2\le \theta\sum_{T\in \mathcal{T}_{h}}{\eta_{T,n}^2},
\end{equation}
then refine the mesh $ T \in \mathcal{T}_{h} $ adaptively based on the elementwise error indicators~$\eta_{n,T}$, $T\in\mathcal{T}_h$ from Theorem~\ref{thm:1}; repeat step~(2) with the new mesh~$\mathcal{T}_h$. Otherwise, i.e. if~\eqref{eq:test} is not fulfilled, then set~$n\leftarrow n+1$, and perform another adaptive Newton step by going back to~(1).
\end{enumerate}
\end{algorithm}

\subsection{Numerical Experiments}

We will now illustrate and test the above Algorithm~\ref{al:full} with a number of numerical experiments in 1d and 2d.

\subsubsection{Problems in 1d}

In the following 1d-experiments we shall employ the fully adaptive procedure from Algorithm~\ref{al:full}, based on the improved prediction strategy from~Algorithm~\ref{al:zs} (with~$\gamma=0.5$).

\begin{example}\label{ex:1}
Let us first consider a {\em linear} singularly perturbed problem: 
\begin{equation}
\label{1.2d}
\begin{aligned}
-\varepsilon u'' + u&=1 \ \text{on } (0,1),\qquad
u_{\varepsilon}(0)= u_{\varepsilon}(1)=0. 
\end{aligned}
\end{equation}
In this case the Newton-Raphson iteration is redundant as it converges to the unique solution in one single step. Our goal is here to test the robustness of the {\em a posteriori} error analysis with respect to~$\varepsilon$ as~$\varepsilon\to 0$. 

Note that the exact solution~$u_\varepsilon$ exhibits two boundary layers at $ x  \in \{0,1\}$; see Figure ~\ref{ra} (left).
We test our algorithm by comparing the true error $ \NN{u_{h}-u}_{\varepsilon,\Omega} $ (cf.~Remark~\ref{rm:4.5}) with the estimated error (i.e., the right-hand side of~\eqref{upperbound}), and compute the efficiency indices (defined by the ratio of the estimated and true errors); the results are displayed in Figure~\ref{bild2} for $ \varepsilon = 10^{-n}$, with $n \in \{0,1,2,3,4,5\} $. For~$\varepsilon=10^{-5}$ we observe from Figure~\ref{ra} (right) that the convergence is of first order as expected. Furthermore, Figure~\ref{bild2} clearly highlights the robustness of the efficiency indices with respect to~$\varepsilon\to 0$. Here, we have used $ \theta = 0.5 $~in \eqref{eq:test}. 
\begin{figure}
\includegraphics[width=0.45\textwidth]{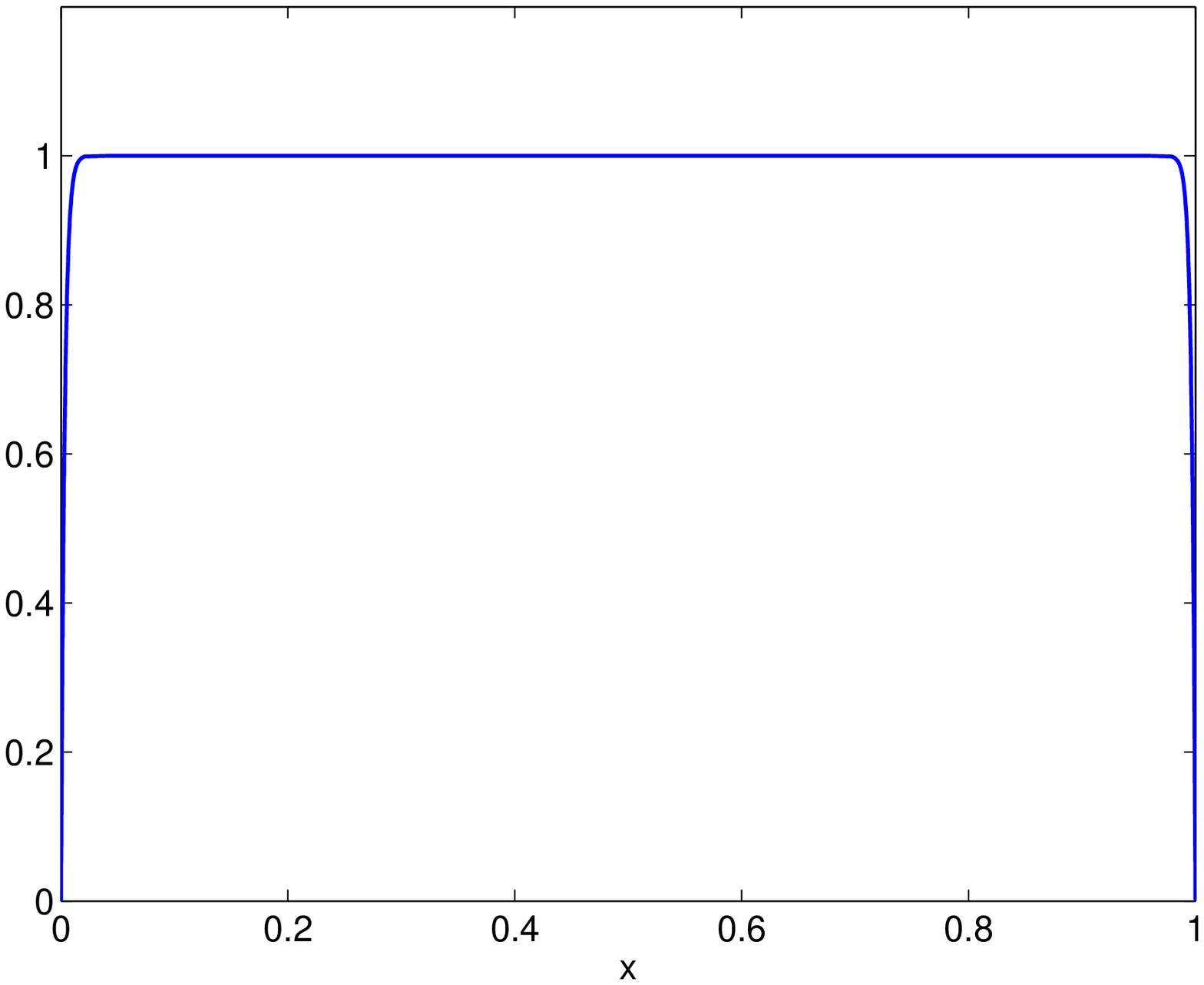}
\hfill
\includegraphics[width=0.5\textwidth]{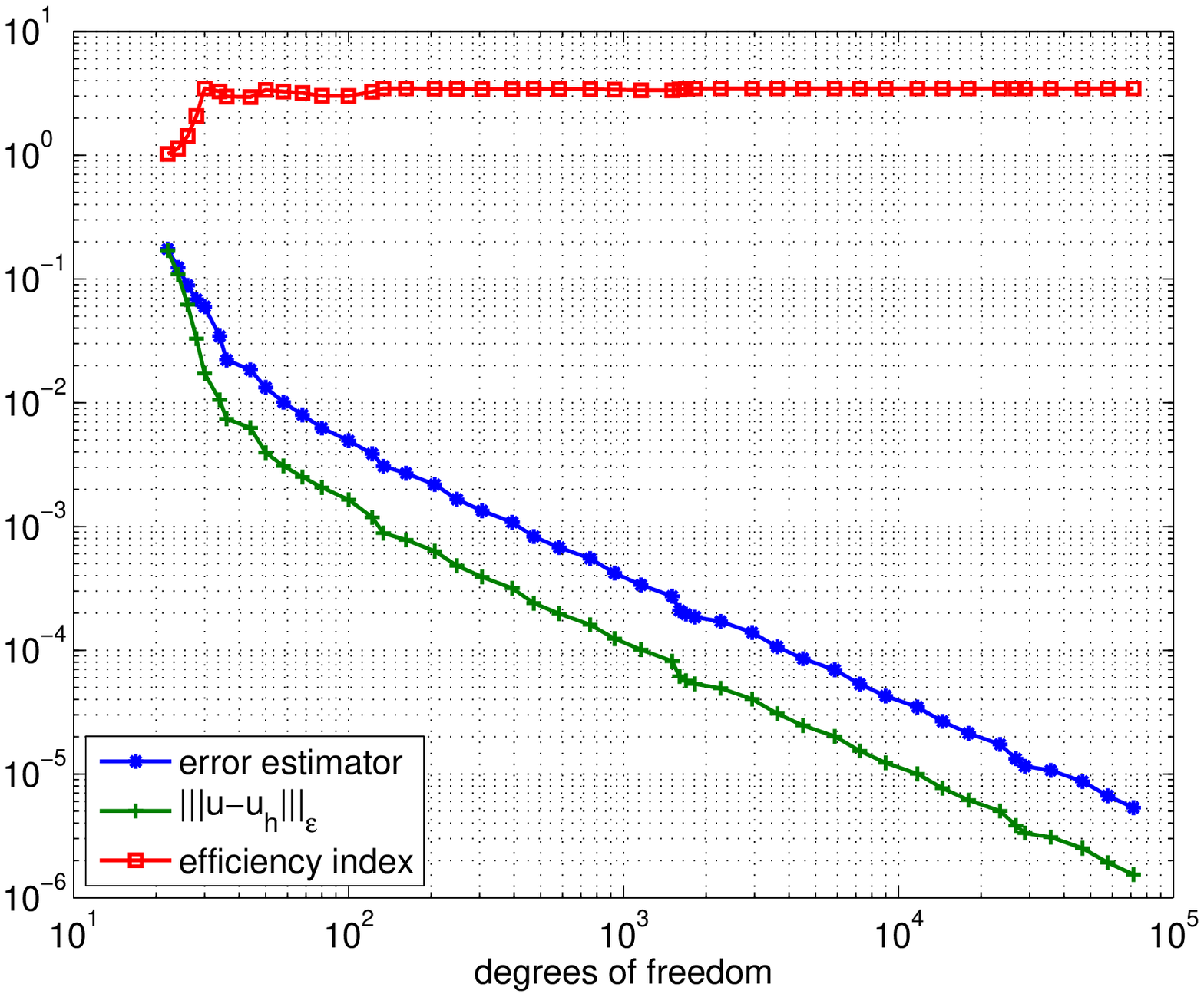}
\caption{Example~\ref{ex:1}: Exact solution (left) and performance of Algorithm~\ref{al:full} (right) for $\varepsilon=10^{-5} $.}
\label{ra}
\end{figure}
\begin{figure}
\includegraphics[width=0.48\textwidth]{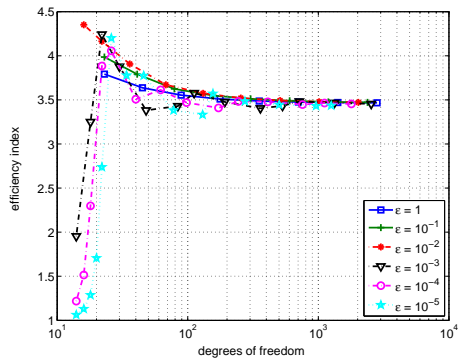}
\caption{Example~\ref{ex:1}: Efficiency indices.}
\label{bild2}
\end{figure}
\end{example}

\begin{example}\label{ex:2}
Furthermore, 
consider Fisher's equation,
\begin{equation}
\label{1.5d}
\begin{aligned}
\varepsilon u'' +u-u^2 &=0 \ \text{on} \ (0,1),\qquad
u_{\varepsilon}(0)=\alpha,\quad \ u_{\varepsilon}(1)=\beta. 
\end{aligned}
\end{equation}
A first integral form for \eqref{1.5d} is given by $E(x,y):=\varepsilon y^2-\frac{2}{3}x^3+x^2 $, from which we readily infer that the solutions have boundary layers close to $ x=0 $ and $ x=1 $. Furthermore, for $ \alpha>-\nicefrac12 $ and  $\beta < 1 $, the solutions feature an increasing number of spikes (which are bounded by~1) as $\varepsilon \to 0$ (see Figure~\ref{spikes}). There are infinitely many solutions (for which there are no analytical solution formulas available in general); see, e.g., \cite{Verhulst} for a more detailed discussion.

In our example, we have started the Newton-Raphson iteration based on a uniform grid with~$100$ nodes, and an initial spike-like function depicted on the left in Figure~\ref{spikes}. Again, we set $\theta = 0.5 $~in \eqref{eq:test}, and perform our experiments for $\tau = 0.1 $ in Algorithm~\ref{al:simple}, and $\varepsilon=0.00025$. 

In Figure~\ref{bild6} we depict the performance of the error estimator. The fully adaptive Newton-Galerkin scheme converges to a numerical solution as shown on the right of Figure~\ref{spikes}. We emphasize that our scheme is able to transport the initial function to a numerical solution which is of similar shape; in particular, it seems clear that the iteration has remained  in the attractor of the solution which contains the initial guess. It is well-known that this will typically not happen for the traditional Newton scheme (with fixed step size~1), or even for a damped Newton method (with fixed step size smaller than~1); indeed, for this type of problem with~$\varepsilon\ll 1$, these methods will mostly fail to converge to a bounded solution at all (see, e.g., \cite{ChNiZo00}). 
\begin{figure}
\includegraphics[width=0.45\textwidth]{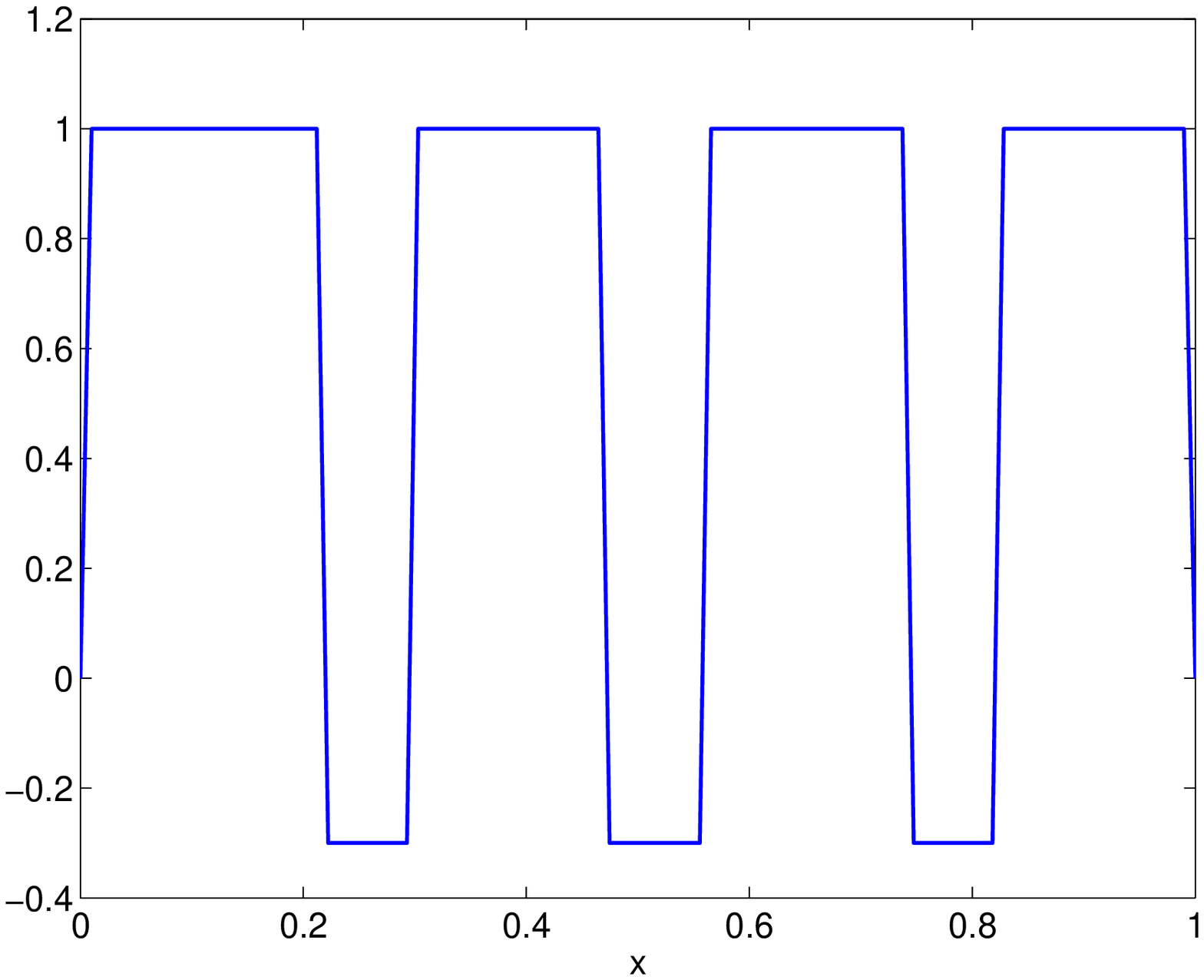}
\hfill
\includegraphics[width=0.45\textwidth]{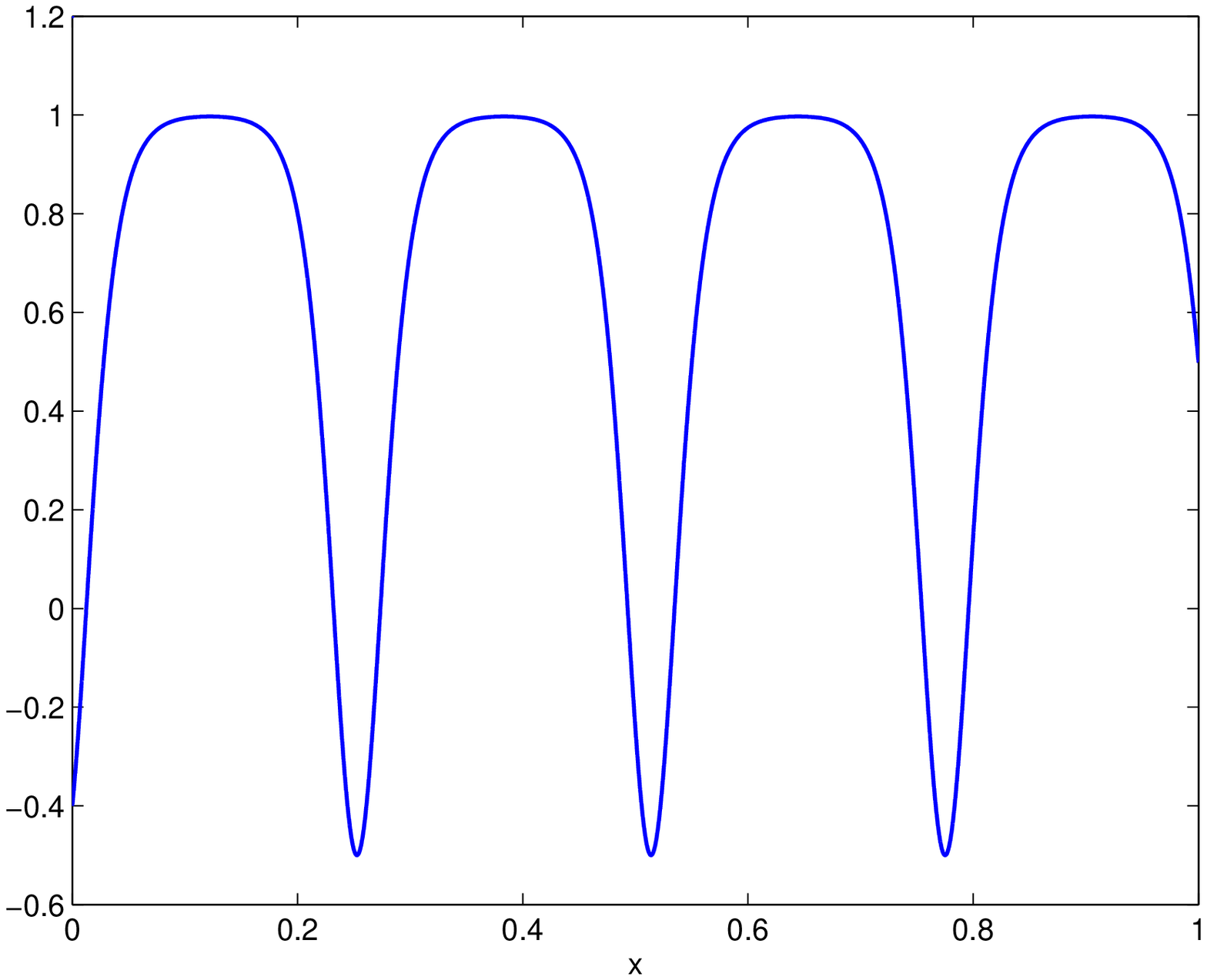}
\caption{Example~\ref{ex:2}: Initial data (left) and numerical solution resulting from Algorithm~\ref{al:full} (right) with $\alpha=-0.4 $, $ \beta=0.5 $, and $ \varepsilon=0.00025$.}
\label{spikes}
\end{figure}
\begin{figure}
\includegraphics[width=0.48\textwidth]{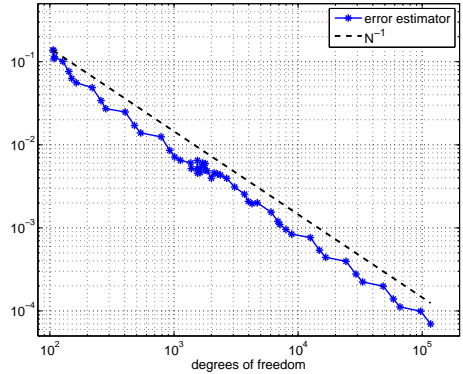}
\caption{Example~\ref{ex:2}: Estimated error for $ \varepsilon=0.00025$.}
\label{bild6}
\end{figure}
\end{example}

\subsubsection{A Problem in 2d}

We will now turn to a 2d-example, where we shall employ the simple prediction strategy presented in Algorithm \ref{al:simple} (see also~\cite{AmWi14}) for the selection of the local Newton-Raphson step size.

\begin{example}\label{ex:5}
Consider the well-known nonlinear Ginzburg-Landau equation on the unit square~$\Omega=(0,1)^2$ given by
\begin{equation}
\label{32d}
\begin{aligned}
\varepsilon\Delta u - u^3+u&=0 \ \text{in } \Omega,\qquad
u = 0 \ \text{on } \partial \Omega.\\
\end{aligned}
\end{equation}
Clearly $ u\equiv 0 $ is a solution. In addition, any solution~$u$ appears pairwise as $-u$ is obviously a solution also. Neglecting the boundary conditions for a moment, one observes that $u\equiv1$ and $u \equiv-1$ are solutions of the PDE. We therefore expect boundary layers along $ \partial \Omega$, and possibly within the domain~$\Omega$; see Figure ~\ref{bild11}, where we depict two different solutions of problem~\eqref{32d}. 

The solution on the top left in Figure~\ref{bild11} was obtained from choosing the initial function $(x_1,x_2)\mapsto\sign(x_2) $, 
whereas the solution on the bottom left was computed by choosing $(x_1,x_2)\mapsto -1 $ (both with enforced zero Dirichlet boundary conditions at the boundary degrees of freedom). The perturbation parameter is chosen to be~$\varepsilon=0.5\cdot 10^{-5}$. We restrict the Newton step size in Algorithm \ref{al:simple} by choosing $ \tau=0.1$. Moreover we have set $ \theta=0.75 $. Again the performance data illustrated on the right-hand side in Figure~\ref{bild11} indicates (optimal) first-order convergence as expected.
\begin{figure}
\includegraphics[width=0.45\textwidth]{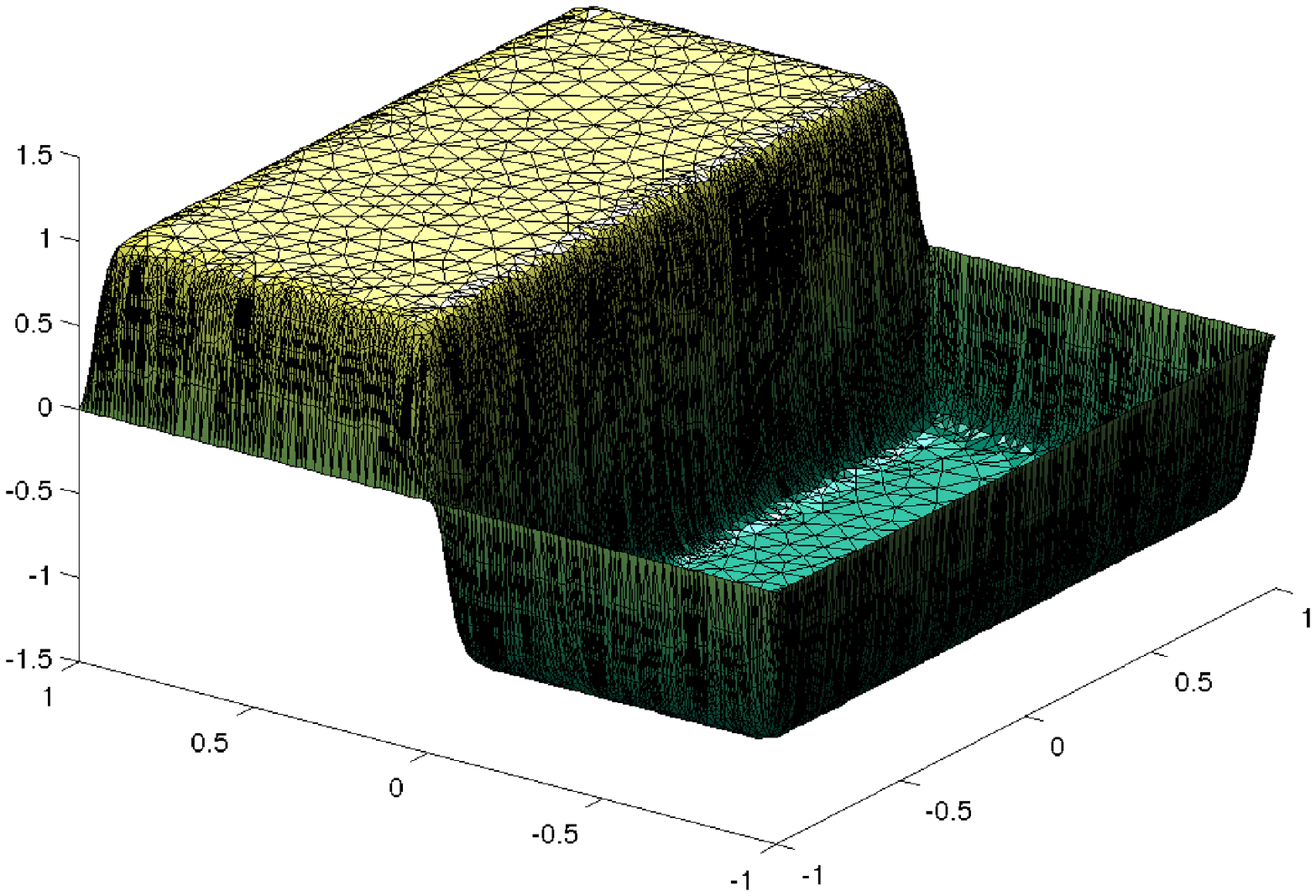}
\hfill
\includegraphics[width=0.45\textwidth]{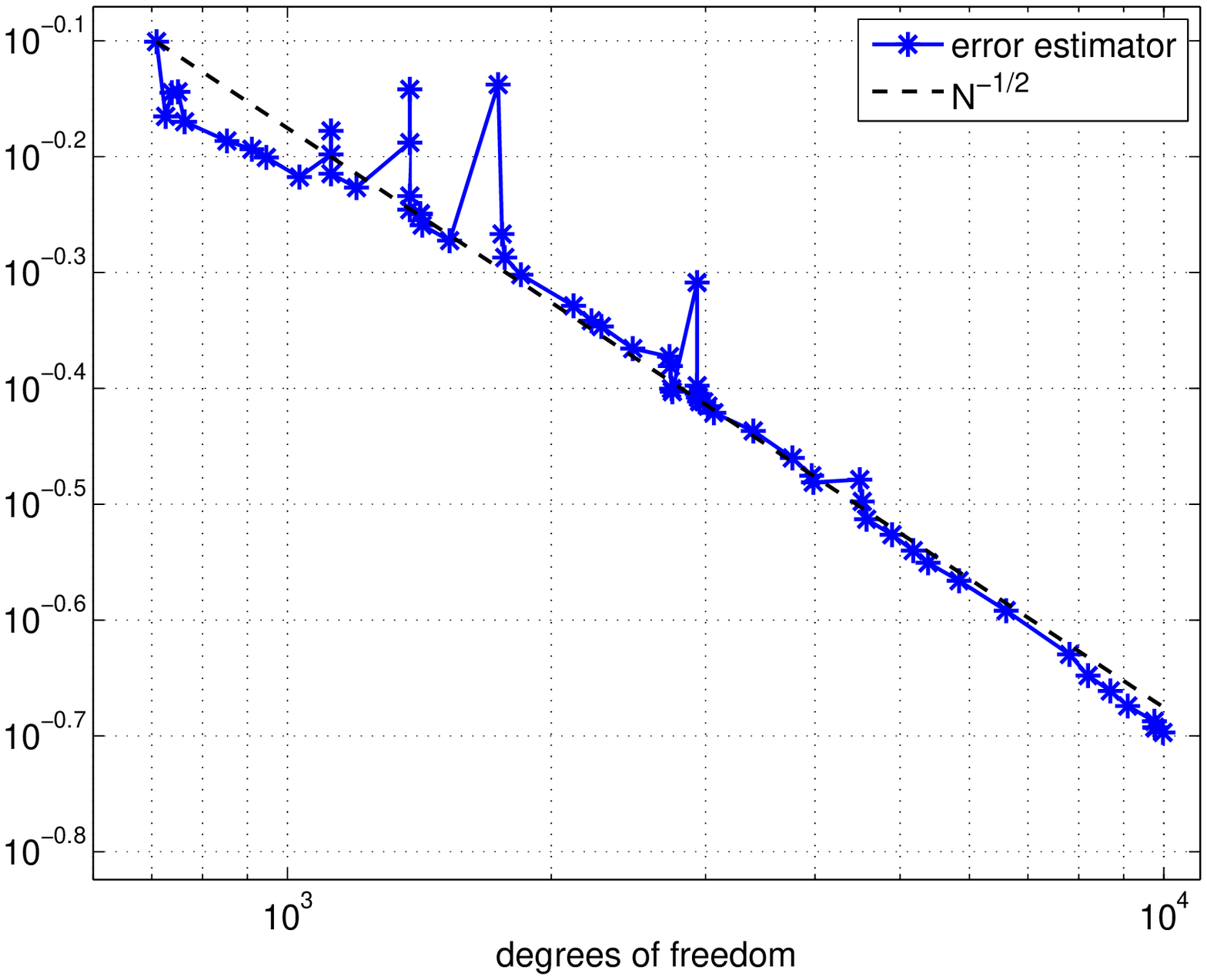}
\hfill
\includegraphics[width=0.45\textwidth]{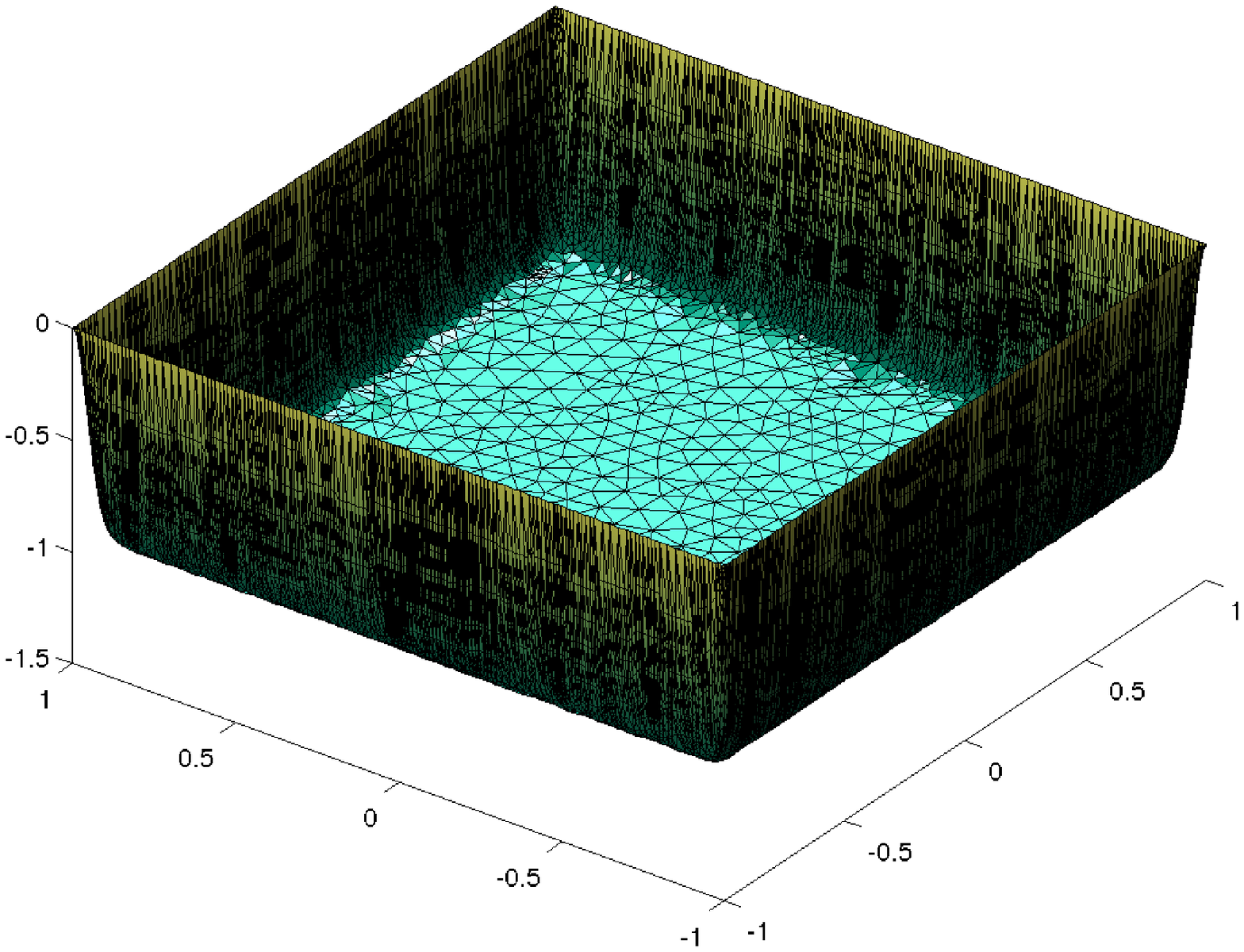}
\hfill
\includegraphics[width=0.45\textwidth]{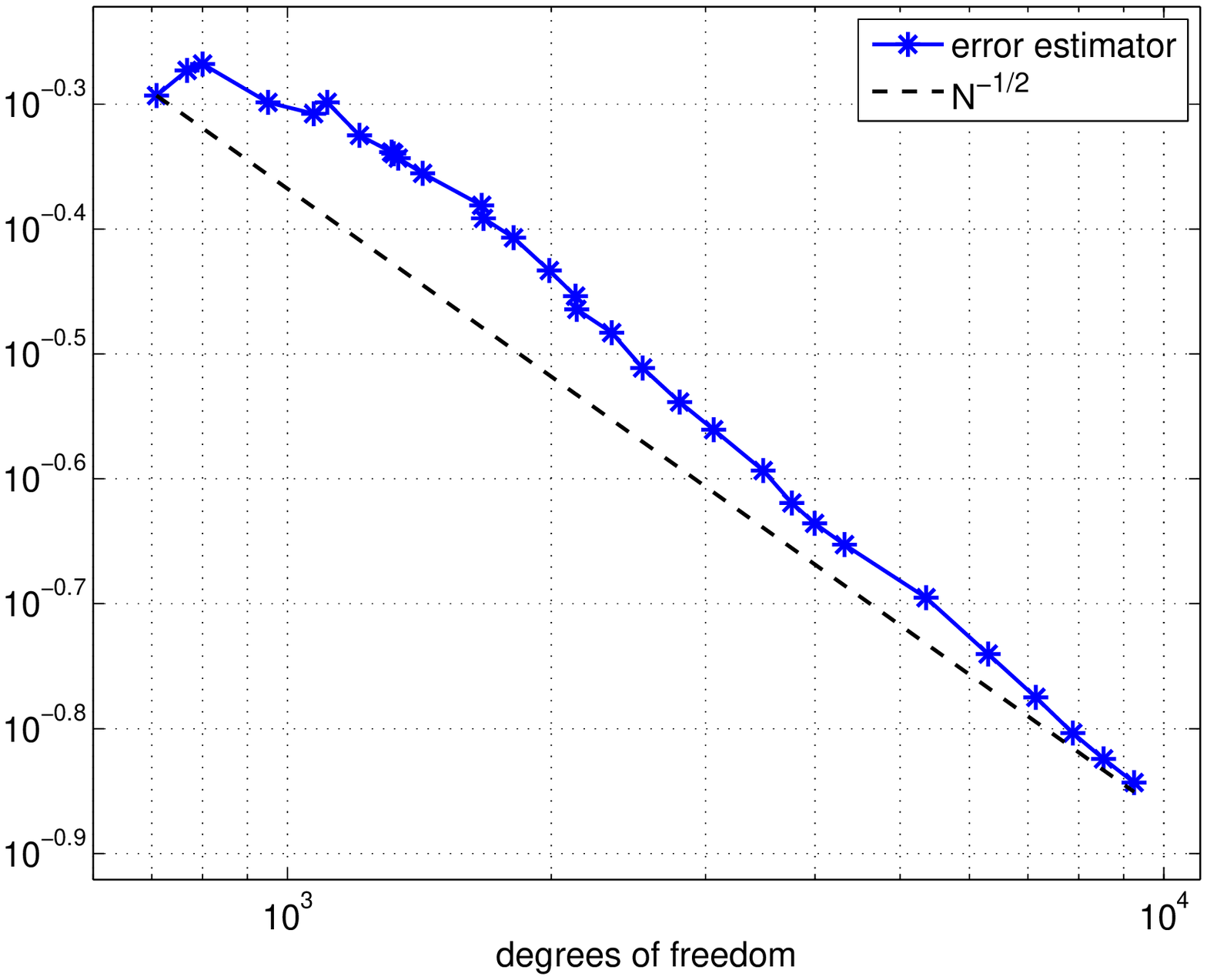}
\caption{Example~\ref{ex:5}: Numerical solutions (left) and the corresponding performances (right) with $\varepsilon=0.5\cdot 10^{-5} $ for different initial guesses.}
\label{bild11}
\end{figure}
\end{example}

\section{Conclusions}\label{sc:concl}
The aim of this paper was to introduce a reliable and computationally feasible procedure for the numerical solution of general, semilinear elliptic boundary value problems with possible singular perturbations. The key idea is to combine an adaptive Newton-Raphson method with an automatic mesh refinement finite element procedure. Here, the (local) Newton-Raphson damping parameter is selected based on interpreting the scheme within the context of step size control for dynamical systems. Furthermore, the sequence of linear problems resulting from the Newton discretization is treated by means of a robust (with respect to the singular perturbations) {\em a posteriori} residual-oriented error analysis and a corresponding adaptive mesh refinement scheme. Our numerical experiments clearly illustrate the ability of our approach to reliably find solutions reasonably close to the initial guesses, and to robustly resolve the singular perturbations at an optimal rate.

\appendix
\section{A Sobolev Inequality}

\begin{lemma}\label{lm:app}
Let~$\Omega\subset\mathbb{R}^d$ be a bounded open interval ($d=1$), or a bounded Lipschitz domain ($d=2$). Then, if~$g\in L^{1+\beta(d-1)}(\Omega)$, for some~$\beta\in(0,1]$, then there holds that
\[
\|guv\|_{L^1(\Omega)}\preccurlyeq\|g\|_{L^{1+\beta(d-1)}(\Omega)}\|\nabla u\|_{L^2(\Omega)}\|\nabla v\|_{L^2(\Omega)},
\]
for any~$u,v\in H^1_0(\Omega)$.
\end{lemma}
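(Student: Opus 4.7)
The plan is to split the argument by dimension, since the exponent $1+\beta(d-1)$ collapses to $1$ when $d=1$ and becomes $1+\beta$ when $d=2$, and the two cases rely on different Sobolev embeddings.

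For $d=1$, the exponent on $g$ is simply $1$. Here I would use the one-dimensional Sobolev embedding $H^1_0(\Omega)\hookrightarrow L^\infty(\Omega)$, together with the Poincar\'e inequality to bound the $L^\infty$-norm by $\|\nabla\cdot\|_{L^2(\Omega)}$. The estimate then follows from a single application of H\"older's inequality:
\[
\int_\Omega |g u v|\dx \le \|g\|_{L^1(\Omega)}\|u\|_{L^\infty(\Omega)}\|v\|_{L^\infty(\Omega)}\preccurlyeq \|g\|_{L^1(\Omega)}\|\nabla u\|_{L^2(\Omega)}\|\nabla v\|_{L^2(\Omega)}.
\]

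For $d=2$, where the exponent is $1+\beta$, the idea is a three-step H\"older/Sobolev chain. First, H\"older's inequality with conjugate exponents $1+\beta$ and $q:=(1+\beta)/\beta$ gives
\[
\int_\Omega |guv|\dx \le \|g\|_{L^{1+\beta}(\Omega)}\|uv\|_{L^{q}(\Omega)}.
\]
Next, Cauchy--Schwarz in the form $\|uv\|_{L^{q}(\Omega)}\le \|u\|_{L^{2q}(\Omega)}\|v\|_{L^{2q}(\Omega)}$ splits the product. Finally, since $2q=2(1+\beta)/\beta<\infty$ for $\beta\in(0,1]$, the two-dimensional Sobolev embedding $H^1_0(\Omega)\hookrightarrow L^{p}(\Omega)$, which holds for every finite $p$ on a bounded Lipschitz domain in $\R^2$, applied with $p=2q$ yields $\|u\|_{L^{2q}(\Omega)}\preccurlyeq\|\nabla u\|_{L^2(\Omega)}$ and the same for $v$.

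The argument is essentially bookkeeping of exponents, so I do not anticipate a real obstacle; the only point worth double-checking is that the Sobolev embedding in $d=2$ genuinely covers every finite $p$ (in particular the endpoint $\beta=0$ is excluded, consistent with the hypothesis $\beta\in(0,1]$), and that the embedding constant is controlled by $\|\nabla\cdot\|_{L^2(\Omega)}$ alone on $H^1_0(\Omega)$ thanks to Poincar\'e. Combining the two cases into the unified bound stated in the lemma concludes the proof.
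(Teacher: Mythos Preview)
Your proposal is correct, and in fact streamlines the paper's argument in both cases.

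For $d=1$ the two proofs are close: you split $\|uv\|_{L^\infty}\le\|u\|_{L^\infty}\|v\|_{L^\infty}$ immediately and then apply the one-dimensional embedding $H^1_0\hookrightarrow L^\infty$ to each factor, whereas the paper first applies that embedding to the product $uv$ (obtaining $\|uv\|_{L^\infty}\preccurlyeq\|\nabla(uv)\|_{L^2}$) and only then uses the product rule and $L^\infty$-bounds on the individual factors to separate $u$ and $v$. For $d=2$ the difference is more pronounced. The paper chooses an auxiliary exponent $r(\delta)=2-\delta\in[1,2)$, applies the subcritical Sobolev inequality $\|uv\|_{L^{q(\delta)}}\preccurlyeq\|\nabla(uv)\|_{L^{r(\delta)}}$ to the product, and then disentangles $u$ and $v$ via the product rule and a second H\"older/Sobolev step. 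You instead split the product first with Cauchy--Schwarz, $\|uv\|_{L^q}\le\|u\|_{L^{2q}}\|v\|_{L^{2q}}$, and then invoke the critical two-dimensional embedding $H^1_0(\Omega)\hookrightarrow L^p(\Omega)$ for every finite $p$ directly on each factor. Your route is shorter and avoids bookkeeping the intermediate exponent $r(\delta)$; the paper's route, on the other hand, stays within the subcritical Sobolev regime and never relies on the borderline embedding (whose constant blows up as $p\to\infty$), which may be why the authors preferred it. Since $\beta>0$ is fixed and hence $2q=2(1+\beta)/\beta$ is finite, your use of the critical embedding is perfectly legitimate here.
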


\begin{proof} We treat the cases~$d=1$ and~$d=2$ separately.

\paragraph{\em Case $d=1$:}
By the Sobolev embedding theorem and the Poincar\'e inequality there holds $ \norm{u}_{L^{\infty}(\Omega)}\preccurlyeq  \norm{\nabla{u}}_{L^2(\Omega)} $. 
Thence, we get
\begin{equation}\label{eq:1d}
\norm{guv}_{L^{1}(\Omega)} \le \norm{g}_{L^{1}(\Omega)}\norm{uv}_{L^{\infty}(\Omega)}\preccurlyeq \norm{g}_{L^{1}(\Omega)}\norm{\nabla(uv)}_{L^{2}(\Omega)}.
\end{equation}
Furthermore, due to the product rule and the triangle inequality, we have
\begin{equation}
\begin{split}
\label{viceversa}
\norm{\nabla(uv)}_{L^{2}(\Omega)}
&\leq \norm{u\nabla{v}}_{L^{2}(\Omega)}+\norm{v\nabla{u}}_{L^{2}(\Omega)}\\
&\le \norm{u}_{L^{\infty}(\Omega)}\norm{\nabla{v}}_{L^{2}(\Omega)}+\norm{v}_{L^{\infty}(\Omega)}\norm{\nabla{u}}_{L^{2}(\Omega)}\\
&\preccurlyeq \norm{\nabla u}_{L^{2}(\Omega)}\norm{\nabla{v}}_{L^{2}(\Omega)}.
\end{split}
\end{equation}
Inserting this bound into~\eqref{eq:1d} completes the argument for~$d=1$.
\paragraph{\em Case $d=2$:}
We choose~$\delta\in(0,1]$ to be specified later, and set~$p(\delta):=\nicefrac{(4-2\delta)}{(4-3\delta)}\in(1,2]$ and~$q(\delta):=\nicefrac{(4-2\delta)}{\delta}\in[2,\infty)$, so that~$p(\delta)^{-1}+q(\delta)^{-1}=1$. Then, by means of H\"older's inequality, we note that
\begin{equation}
\label{app1}
\norm{g uv}_{L^{1}(\Omega)}\leq \norm{g}_{L^{p(\delta)}(\Omega)}\norm{uv}_{L^{q(\delta)}(\Omega)}.
\end{equation}
Here, referring to~\cite[Theorem~3.4.3]{jost}), there holds 
\begin{equation}
\label{sobolevineq}
\norm{uv}_{L^{q(\delta)}(\Omega)}\preccurlyeq \norm{\nabla(uv)}_{L^{r(\delta)}(\Omega)},
\end{equation}
with~$ r(\delta):=2-\delta\in[1,2)$. Using the product rule together with the triangle inequality, results in
\begin{equation}
\label{app2}
\norm{\nabla{(uv)}}_{L^{r(\delta)}(\Omega)}\leq \norm{u\nabla{v}}_{L^{r(\delta)}(\Omega)}+ \norm{v\nabla{u}}_{L^{r(\delta)}(\Omega)}.
\end{equation}
Then, invoking H\"older's inequality again as well as~\eqref{sobolevineq}, we see that
\begin{equation}
\label{app3}
\begin{aligned}
\norm{u\nabla{v}}_{L^{r(\delta)}(\Omega)}
&\le\norm{u}_{L^{\nicefrac{2r(\delta)}{(2-r(\delta))}}(\Omega)}\norm{\nabla{v}}_{L^{2}(\Omega)}\\
&=\norm{u}_{L^{q(\delta)}(\Omega)}\norm{\nabla{v}}_{L^{2}(\Omega)}
\preccurlyeq \norm{\nabla{u}}_{L^{r(\delta)}(\Omega)}\norm{\nabla{v}}_{L^{2}(\Omega)}\\
&\preccurlyeq \norm{\nabla{u}}_{L^{2}(\Omega)}\norm{\nabla{v}}_{L^{2}(\Omega)},
\end{aligned}
\end{equation}
and similarly,
\begin{equation}\label{app3b}
 \norm{v\nabla{u}}_{L^{r(\delta)}(\Omega)}
 \preccurlyeq \norm{\nabla{v}}_{L^{2}(\Omega)}\norm{\nabla{u}}_{L^{2}(\Omega)}.
\end{equation}
Combining \eqref{app1}--\eqref{app3b}, we end up with 
\[
\|guv\|_{L^1(\Omega)}\preccurlyeq\|g\|_{L^{p(\delta)}(\Omega)}\|\nabla u\|_{L^2(\Omega)}\|\nabla v\|_{L^2(\Omega)},
\]
which shows the claim with $p(\delta)=1+\beta\in(1,2]$.
\end{proof}

\bibliographystyle{amsplain}
\bibliography{references}

\providecommand{\bysame}{\leavevmode\hbox to3em{\hrulefill}\thinspace}
\providecommand{\MR}{\relax\ifhmode\unskip\space\fi MR }
\providecommand{\MRhref}[2]{%
  \href{http://www.ams.org/mathscinet-getitem?mr=#1}{#2}
}
\providecommand{\href}[2]{#2}
\begin{thebibliography}{10}

\bibitem{AmMa06}
A.~Ambrosetti and A.~Malchiodi, \emph{Perturbation methods and semilinear
  elliptic problems on {${\bf R}^n$}}, Progress in Mathematics, vol. 240,
  Birkh\"auser Verlag, Basel, 2006.

\bibitem{AmWi14}
M.~Amrein and T.~P. Wihler, \emph{An adaptive {N}ewton-method based on a
  dynamical systems approach}, Commun. Nonlinear Sci. Numer. Simul. \textbf{19}
  (2014), no.~9, 2958--2973.

\bibitem{BaBu95}
G.~Barles and J.~Burdeau, \emph{The {D}irichlet problem for semilinear
  second-order degenerate elliptic equations and applications to stochastic
  exit time control problems}, Comm. Partial Differential Equations \textbf{20}
  (1995), no.~1-2, 129--178.

\bibitem{BeLi83}
H.~Berestycki and P.-L. Lions, \emph{Nonlinear scalar field equations. {I}.
  {E}xistence of a ground state}, Arch. Rational Mech. Anal. \textbf{82}
  (1983), no.~4, 313--345.

\bibitem{CaCo03}
R.~S. Cantrell and C.~Cosner, \emph{Spatial ecology via reaction-diffusion
  equations}, Wiley Series in Mathematical and Computational Biology, John
  Wiley \& Sons, Ltd., Chichester, 2003.

\bibitem{ChNiZo00}
G.~Chen, J.~Zhou, and W.-M. Ni, \emph{Algorithms and visualization for
  solutions of nonlinear elliptic equations}, Internat. J. Bifur. Chaos Appl.
  Sci. Engrg. \textbf{10} (2000), no.~7, 1565--1612.

\bibitem{5}
P.~Deuflhard, \emph{Newtons method for nonlinear problems}, Springer Ser.
  Comput. Math., 2004.

\bibitem{DSB95}
M.~Drexler, I.~J. Sobey, and C.~Bracher, \emph{On the fractal characteristics
  of a stabilised {N}ewton method}, Tech. Report NA-95/26, Computing
  Laboratory, Oxford University, 1995.

\bibitem{Ed05}
L.~Edelstein-Keshet, \emph{Mathematical models in biology}, Classics in Applied
  Mathematics, vol.~46, Society for Industrial and Applied Mathematics (SIAM),
  Philadelphia, PA, 2005, Reprint of the 1988 original.

\bibitem{epureanu:102}
B.~I. Epureanu and H.~S. Greenside, \emph{Fractal basins of attraction
  associated with a damped {N}ewton's method}, SIAM Review \textbf{40} (1998),
  no.~1, 102--109.

\bibitem{ErVo13}
A.~Ern and M.~Vohral{\'{\i}}k, \emph{Adaptive inexact {N}ewton methods with a
  posteriori stopping criteria for nonlinear diffusion {PDE}s}, SIAM J. Sci.
  Comput. \textbf{35} (2013), no.~4, A1761--A1791.

\bibitem{Fr08}
A.~Friedman (ed.), \emph{Tutorials in mathematical biosciences. {IV}}, Lecture
  Notes in Mathematics, vol. 1922, Springer, Berlin; MBI Mathematical
  Biosciences Institute, Ohio State University, Columbus, OH, 2008, Evolution
  and ecology, Mathematical Biosciences Subseries.

\bibitem{jost}
J.~Jost and X.~Li-Jost, \emph{Calculus of variations}, Cambridge Studies in
  Advanced Mathematics, vol.~64, Cambridge University Press, Cambridge, 1998.

\bibitem{MeWi14}
J.~M. Melenk and T.~P. Wihler, \emph{A posteriori error analysis of $hp$-{FEM}
  for singularly perturbed problems}, submitted (2014).

\bibitem{neuberger}
J.~W. Neuberger, \emph{Continuous {N}ewton's method for polynomials}, The
  Mathematical Intelligencer \textbf{21} (1999), no.~3, 18--23.

\bibitem{Ni11}
W.-M. Ni, \emph{The mathematics of diffusion}, CBMS-NSF Regional Conference
  Series in Applied Mathematics, vol.~82, Society for Industrial and Applied
  Mathematics (SIAM), Philadelphia, PA, 2011.

\bibitem{OkLe01}
A.~Okubo and S.~A. Levin, \emph{Diffusion and ecological problems: modern
  perspectives}, second ed., Interdisciplinary Applied Mathematics, vol.~14,
  Springer-Verlag, New York, 2001.

\bibitem{peitgen}
H.-O. Peitgen and P.~H. Richter, \emph{The beauty of fractals}, Springer
  Verlag, 1986.

\bibitem{Ra86}
P.~H. Rabinowitz, \emph{Minimax methods in critical point theory with
  applications to differential equations}, CBMS Regional Conference Series in
  Mathematics, vol.~65, Published for the Conference Board of the Mathematical
  Sciences, Washington, DC; by the American Mathematical Society, Providence,
  RI, 1986.

\bibitem{RoStTo08}
H.-G. Roos, M.~Stynes, and L.~Tobiska, \emph{Robust numerical methods for
  singularly perturbed differential equations}, second ed., Springer Series in
  Computational Mathematics, vol.~24, Springer-Verlag, Berlin, 2008,
  Convection-diffusion-reaction and flow problems.

\bibitem{ScWi11}
H.~R. Schneebeli and T.~P. Wihler, \emph{The {N}ewton-{R}aphson method and
  adaptive {ODE} solvers}, Fractals \textbf{19} (2011), no.~1, 87--99.

\bibitem{smale}
S.~Smale, \emph{On the efficiency of algorithms of analysis},
  Bull.~Amer.~Math.~Soc. (N.S.) \textbf{13} (1985), no.~2, 87--121.

\bibitem{Sm94}
J.~Smoller, \emph{Shock waves and reaction-diffusion equations}, second ed.,
  Grundlehren der Mathematischen Wissenschaften [Fundamental Principles of
  Mathematical Sciences], vol. 258, Springer-Verlag, New York, 1994.

\bibitem{St77}
W.~A. Strauss, \emph{Existence of solitary waves in higher dimensions}, Comm.
  Math. Phys. \textbf{55} (1977), no.~2, 149--162.

\bibitem{Verfuerth}
R.~Verf{\"u}rth, \emph{Robust a posteriori error estimators for a singularly
  perturbed reaction-diffusion equation}, Numer. Math. \textbf{78} (1998),
  no.~3, 479--493.

\bibitem{Verfuerthbook}
R.~Verf{\"u}rth, \emph{A posteriori error estimation techniques for finite
  element methods}, Numerical Mathematics and Scientific Computation, Oxford
  University Press, Oxford, 2013.

\bibitem{Verhulst}
F.~Verhulst, \emph{Methods and applications of singular perturbations}, Texts
  in Applied Mathematics, vol.~50, Springer, New York, 2005, Boundary layers
  and multiple timescale dynamics.

\end{thebibliography}

\end{document}